\numberwithin{equation}{section} % Number equations within sections (i.e. 1.1, 1.2, 2.1, 2.2 instead of 1, 2, 3, 4)
\numberwithin{figure}{section} % Number figures within sections (i.e. 1.1, 1.2, 2.1, 2.2 instead of 1, 2, 3, 4)
\numberwithin{table}{section} % Number tables within sections (i.e. 1.1, 1.2, 2.1, 2.2 instead of 1, 2, 3, 4)
\newtheorem{thm}{Theorem}[section]
\newtheorem{lemma}[thm]{Lemma}
\newtheorem{conj}[thm]{Conjecture}
\newtheorem{cor}[thm]{Corollary}
\newtheorem{prop}[thm]{Proposition}
\theoremstyle{definition}
\newtheorem{definition}[thm]{Definition}
\theoremstyle{remark}
\newtheorem*{remark}{Remark}
\def\BB{\mathbb{B}}
\def\Proj{\mathbb{P}}
\def\Aff{\mathbb{A}}
\let\QQ\Rational
\let\ZZ\Integer
\def\calO{\mathcal{O}}
\def\gcd{\operatorname{gcd}}
\def\genus{\operatorname{genus}}
\def\M{\text{M}_{11}}
\def\PSL{\text{PSL}_2(11)}
\def\Ru{\text{Ru}}
\newcommand\Ind[2]{{
  \text{Ind}_{#2}^{#1}
  }}
\newcommand{\Addresses}{{
  \bigskip
  \textsc{Dean Bisogno: Colorado State University, Fort Collins, CO 80523, USA}\par\nopagebreak
  \textit{E-mail address}: \texttt{bisogno@math.colostate.edu}

}}
\title{
\normalfont \normalsize
\huge Abhyankar's Inertia Conjecture for Some Sporadic Groups\\
%\horrule{2pt} \\[0.5cm] % Thick bottom horizontal rule
}
\author{Dean Bisogno}
\date{\normalsize\today}
\begin{document}

\maketitle

\begin{abstract}
\noindent ABSTRACT. This paper verifies Abhyankar's Inertia Conjecture for certain sporadic groups $G$ in particular characteristics by showing that all possible inertia groups occur for $G$-Galois covers of the affine line. For a larger set of sporadic groups, all but finitely many possible ramification invariants are shown to occur. In particular, we prove that all but eight of the possible ramification invariants are realizable for $\text{M}_{11}$-Galois covers of the affine line.

\smallskip
\noindent MSC2020. 11G20, 12F12, 14H30, 20D08

\smallskip
\noindent Keywords. Mathieu group, Sporadic group, inverse Galois theory, Galois cover, semi-stable reduction, positive characteristic, ramification, inertia, conductor, curve.
\end{abstract}

\section{Introduction}
Following the work of Serre, \cite{MR1071640}, Raynaud, and Harbater proved Abhyankar's Conjecture for Galois covers of affine curves in positive characteristic. Let $k$ be an algebraically closed field of characteristic $p$. Let $G$ be a finite group and $p(G)$ be the normal subgroup of $G$ generated by elements of $p$-power order.
\begin{thm}[Abhyankar's Conjecture \cite{MR0094354,MR1253200,MR1269423}]\label{thm:AC}
Let $X$ be a smooth projective curve of genus $g$ defined over $k$. Let $B$ be a finite
non-empty set of points of $X$ having cardinality $r$ and let $U = X \setminus B$. A finite group $G$ is the Galois group of an unramified cover of $U$ if and only if $G/p(G)$ has a generating set of size at most $2g + r - 1$.
\end{thm}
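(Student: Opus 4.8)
\emph{Necessity.} The forward implication is elementary. Suppose $G=\gal(Y/U)$ for a connected \'etale $G$-cover $Y\to U$. A Sylow $p$-subgroup of $G$ is contained in $p(G)$, so $|G/p(G)|$ is prime to $p$; hence pushing the cover forward along $G\twoheadrightarrow G/p(G)$ produces a connected \'etale cover of $U$ with group $G/p(G)$ that is \emph{tamely} ramified along $B$. Thus $G/p(G)$ is a quotient of the tame fundamental group $\pi_1^{\mathrm t}(U)$, which by Grothendieck's structure theorem is the profinite completion of the surface group $\langle a_1,b_1,\dots,a_g,b_g,c_1,\dots,c_r \mid \prod_i[a_i,b_i]\,\prod_j c_j = 1\rangle$. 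This group is topologically generated by $2g+r-1$ elements, hence so is its quotient $G/p(G)$.

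\emph{Reduction of the converse to the affine line.} Now assume $G/p(G)$ is generated by $n\le 2g+r-1$ elements; this converse is the hard part, due to Raynaud and Harbater. The plan is: first realize the tame quotient $G/p(G)$ as a quotient of $\pi_1^{\mathrm t}(U)$, which is possible by the surface-group presentation above; then enlarge this cover to a $G$-cover by Harbater's method of formal patching, which localizes the obstruction to the behaviour near a single chosen branch point and thereby reduces the construction to the case of the affine line. Concretely, it suffices to prove that every quasi-$p$ group is the Galois group of a connected \'etale cover of $\mathbb{A}^1_k$ --- the $g=0$, $r=1$ instance of the theorem, where $2g+r-1=0$ already forces $G=p(G)$.

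\emph{The affine-line case (Raynaud).} For this I would induct on $|G|$. Base cases: a $p$-group is realized by iterated Artin--Schreier--Witt theory over the function field of $\mathbb{A}^1_k$, and, more generally, a $p$-solvable quasi-$p$ group by Serre's technique of successively solving embedding problems with $p$-group or with prime-to-$p$ kernel. For the inductive step one separates (i) the reducible case --- where $G$ is built from a proper quasi-$p$ subgroup $H$, for instance when $G$ is generated by conjugates of $H$, so that a $G$-cover can be assembled by patching together several branches of a cover with group $H$ --- from (ii) the irreducible core, where no such subgroup or normal subgroup is available. In case (ii) one constructs an auxiliary $G$-cover of $\mathbb{P}^1$ over a $p$-adic field with prescribed branch locus, analyzes its stable reduction using rigid-analytic geometry and the theory of vanishing cycles, and extracts from a carefully chosen component of the special fibre a $G$-Galois cover of $\mathbb{A}^1_k$ all of whose ramification has been pushed onto $\infty$.

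\emph{Main obstacle.} The decisive difficulty is case (ii): one must control the combinatorics of the stable reduction --- the dual graph of the special fibre, the specialization of the branch divisor, and the inertia and conductor data along the vertical components --- tightly enough to ensure that some component yields a cover that is \'etale over \emph{all} of $\mathbb{A}^1_k$, not merely over a proper open subset. Everything else --- the patching reductions of the preceding two paragraphs and the necessity argument --- is comparatively formal; this geometric core is where Raynaud's theorem truly lives.
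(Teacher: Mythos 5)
The paper does not prove this statement: Theorem~\ref{thm:AC} is quoted as a known deep result of Raynaud and Harbater, with the three citations standing in for the proof. So there is no in-paper argument to compare against; what you have written is a roadmap of the published proof, and as a roadmap it is accurate. Your necessity argument is essentially complete and correct: $G/p(G)$ has order prime to $p$, so the quotient cover is tame, and Grothendieck's description of $\pi_1^{\mathrm t}(U)$ as a quotient of the profinite surface group (with $r\ge 1$ letting you eliminate one generator $c_j$ via the single relation) gives the bound $2g+r-1$. This is the same mechanism the paper itself invokes, in miniature, when it deduces the ``only if'' direction of Conjecture~\ref{AIC} from the triviality of the tame fundamental group of $\Aff^1_k$.

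For the converse, however, your proposal names the two genuinely hard steps without supplying them, and you say so yourself. The reduction to the affine line is not a formality: one must realize the prescribed prime-to-$p$ quotient tamely and then solve the embedding problem for $G$ over it by formal patching, and making ``localizes the obstruction near a single branch point'' precise is the content of Harbater's paper. Likewise, in the affine-line case, your case (ii) --- controlling the stable reduction tightly enough that some tail component carries a connected $G$-cover \'etale over all of $\Aff^1_k$ --- is precisely Raynaud's theorem; the notions of $p$-purity and the vanishing-cycles formula that the present paper reuses in Sections~2 and~4 were invented exactly to carry out that step. So the proposal should be read as a correct outline of the standard proof with its decisive geometric core left open, not as a proof; since the paper treats the theorem as a black box, this is a reasonable level of detail for exposition but not a substitute for the cited references.
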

Call $G$ quasi-$p$ if $G = p(G)$. A simple group is quasi-$p$ for any prime dividing its order. When $X$ is the projective line $\Proj_k^1$ and $B = \{ \infty \}$, then Theorem~\ref{thm:AC} states that a finite group $G$ is the Galois group of an unramified cover of $\Aff_k^1$ if and only if a generating set of $G/p(G)$ has size at most $0$. Thus a finite group $G$ is the Galois group of an unramified cover of the affine line over $k$ if and only if $G$ is quasi-$p$. Following the proof of Theorem~\ref{thm:AC}, Abhyankar stated Conjecture~\ref{AIC}.

\begin{conj}[Abhyankar's Inertia Conjecture {\cite[Section 16]{MR1816069}}]\label{AIC}
Let $G$ be a finite quasi-$p$ group. Let $I$ be a subgroup of $G$ which is an extension of a cyclic group of order prime-to-$p$ by a $p$-group $J$. Then $I$ occurs as an inertia group for a $G$-Galois cover of $\Proj_k^1$ branched only at $\infty$ if and only if the conjugates of $J$ generate $G$.
\end{conj}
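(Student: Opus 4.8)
\emph{Overview.} The statement splits into a necessity direction and a sufficiency direction, and I would treat the two quite differently: the first is short and unconditional, the second is the substance of the conjecture.

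\emph{Necessity.} Suppose $I$ occurs as an inertia group for a connected $G$-Galois cover $\phi\colon Y \to \Proj_k^1$ branched only at $\infty$, and let $y_0 \in Y$ lie over $\infty$ with $I = I_{y_0}$. Since $k$ is algebraically closed of characteristic $p$, the inertia group $I$ is automatically an extension of a cyclic prime-to-$p$ group by its unique Sylow $p$-subgroup $J$ (the wild inertia), so the genuine content of the hypothesis on $I$ is the generation condition, which must be derived. Let $N \trianglelefteq G$ be the normal closure $\langle gJg^{-1} : g\in G\rangle$, and form the intermediate $G/N$-Galois cover $Z := Y/N \to \Proj_k^1$, which is again connected and branched only at $\infty$. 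Its inertia over $\infty$ is $NI/N \cong I/(I\cap N)$, a quotient of $I/J$ and hence cyclic of order prime to $p$; thus $Z \to \Proj_k^1$ is at worst tamely ramified, i.e. a tame cover of $\Proj_k^1$ branched in the single point $\infty$. By Riemann--Hurwitz (equivalently, triviality of the tame fundamental group of $\Aff_k^1$ with tame ramification allowed at $\infty$), such a cover is trivial, so $G/N = 1$ and the conjugates of $J$ generate $G$.

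\emph{Sufficiency.} For the converse one is handed $I$, realized as an extension $1 \to J \to I \to C \to 1$ with $C$ cyclic of order prime to $p$ and $\langle gJg^{-1}: g\in G\rangle = G$, and must build a connected $G$-Galois cover of $\Proj_k^1$ branched only at $\infty$ whose inertia group at $\infty$ is exactly $I$, with a prescribed ramification filtration. I would follow the formal/rigid patching framework that Raynaud and Harbater used to prove Theorem~\ref{thm:AC}, upgraded to keep track of inertia. \textbf{(i)} First produce the local data: by Harbater--Katz--Gabber theory, specifying the behaviour at $\infty$ amounts to exhibiting an $I$-Galois extension of $k((t))$ with the desired upper-numbering jumps, and such extensions exist with considerable flexibility precisely because $I$ has the required shape; the set of attainable jumps is what controls which "ramification invariants'' can occur. \textbf{(ii)} Using the generation hypothesis, run Raynaud's inductive construction, built from the $p$-subgroups of $G$, to assemble an auxiliary $G$-Galois cover of $\Proj_k^1$ that is étale over $\Aff_k^1$ away from finitely many auxiliary points, that has the local $I$-cover of (i) glued in over a formal (or rigid) neighbourhood of $\infty$, and that is connected because the conjugates of $J$, carried along the gluing, generate $G$; formal patching (Harbater's existence theorem / GAGF) turns this combinatorial data into an honest cover. \textbf{(iii)} Finally degenerate: place the auxiliary branch points in an equicharacteristic family and let them collide with $\infty$, use stable reduction of covers to follow the inertia groups and conductors through the specialization, and arrange that in the limit every branch point is absorbed into $\infty$ with total inertia exactly $I$, the generation condition again guaranteeing connectedness of the limit.

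\emph{Main obstacle.} The hard part is Step (iii) together with realizing $I$ \emph{exactly}, with the prescribed filtration rather than merely as a subquotient of some inertia group. The Raynaud--Harbater patching readily produces \emph{some} cover of $\Proj_k^1$ branched only at $\infty$, but exerts essentially no control over which inertia group appears; forcing the degeneration to deposit all ramification at $\infty$ with the prescribed wild part $J$, tame quotient $C$, and no spurious higher jumps demands delicate control of the different along every contracted component of the stable model, and this is exactly where a uniform argument breaks down. Accordingly I expect the proof to proceed group-by-group --- for $p$-groups, for cyclic $I$, for alternating and various simple groups, and (as in this paper) for sporadic groups --- with the best uniform partial statement taking the weaker form that all but finitely many ramification invariants are realized.
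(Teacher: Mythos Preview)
Your necessity argument is correct and essentially identical to the paper's: form the normal closure $N$ of the conjugates of $J$, observe that the quotient $G/N$-cover of $\Proj_k^1$ is tamely ramified at $\infty$, and invoke triviality of the tame fundamental group of $\Aff_k^1$ (the paper cites Grothendieck, SGA~1, Corollary~XIII.2.12, where you appeal to Riemann--Hurwitz; these are equivalent here) to conclude $N=G$.

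For the sufficiency direction, note that the paper does \emph{not} prove it: this is precisely the open content of Abhyankar's Inertia Conjecture, and the entire paper is devoted to verifying it for particular sporadic groups in particular characteristics. Your outline (local Harbater--Katz--Gabber data, Raynaud--Harbater patching, then degeneration) is a reasonable heuristic picture of the toolkit, and you correctly diagnose the obstruction in your ``Main obstacle'' paragraph: patching produces \emph{some} cover branched only at $\infty$, but gives no a priori control over which inertia group appears, and tracking the inertia through a degeneration is exactly where a uniform argument is missing. Your expectation that one must ``proceed group-by-group'' is in fact the current state of the art, and is what the paper does. So your write-up is not a proof of the conjecture, but it is an accurate assessment of why it remains a conjecture; just be careful not to present Steps~(i)--(iii) as if they constitute an argument --- Step~(iii) in particular is a wish, not a construction.
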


The condition on $J$ in Conjecture~\ref{AIC} is necessary. Suppose $G$ and $I$ are as in Conjecture~\ref{AIC} and that $I$ is the inertia group of some $G$-Galois cover of $\Proj_k^1$ branched only at $\infty$. Let $H$ be the normal subgroup of $G$ generated by the conjugates of $J$. Then the $G/H$-Galois quotient cover is tamely ramified at $\infty$. Grothendieck showed that the tame fundamental group of the affine line is trivial \cite[Corollary~XIII.2.12]{MR0217087}. Consequently, $H = G$ which proves the ``only if'' direction of Conjecture~\ref{AIC}.
%Also note that by the Schur-Zassenhaus Theorem \cite[pg.\ 132]{MR0091275}, the extension of a cyclic group of order $m$ coprime to $p$ by a $p$-group of order $m$ is a semi-direct product.

Fix $k = \overline{F}_p$ and a quasi-$p$ group $G$. 
\begin{definition}
Denote the set of potential inertia groups of $G$-Galois covers of $\mathbb{P}^1_k$ branched only at $\infty$ by $\mathcal{I}_p(G)$. Explicitly $\mathcal{I}_p(G)$ is defined in the following way
\begin{align*}
\mathcal{I}_p(G) = \{I \subset G \mid I \text{ satisfies the hypotheses of Conjecture~\ref{AIC}}\}.
\end{align*}
Throughout this paper we specify a $G$-Galois cover of $\mathbb{P}^1_k$ branched only at $\infty$ with particular inertia group $I \in \mathcal{I}_p(G)$ at a ramified point. Such a cover is called a $(G,I)$-Galois cover. We say that Conjecture \ref{AIC} is true (or verified) for $G$ in characteristic $p$ if for every $I \in \mathcal{I}_p(G)$ there exists a $(G,I)$-Galois cover.
\end{definition}

This paper verifies Conjecture \ref{AIC} for certain sporadic groups in various characteristics. In order to do so we prove Lemma~\ref{lma:BijectionAndLabels}, a technical lemma which allows us to construct a well-defined thickening problem. Work of Habater and Stevenson \cite{MR1670658} and Pries \cite{MR2016596} determines the existence of solutions to these thickening problems. This allows us to prove the following theorem.
\begin{thm}
Suppose $H \subset G$ are finite quasi-$p$ groups, the index $[G:H]$ is coprime to $p$, a Sylow $p$-subgroup of $G$ has order $p$, and every $I \in \mathcal{I}_p(G)$ is a $G$-conjugate of some $I' \in \mathcal{I}_p(H)$. If Conjecture~\ref{AIC} is true for $H$ in characteristic $p$, then it is true for $G$ in characteristic $p$.
\end{thm}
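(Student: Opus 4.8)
The plan is to obtain each required inertia group for $G$ by inducing a known cover for $H$ up to $G$ and then repairing the resulting loss of connectedness near $\infty$ by solving a thickening problem. First, occurring as an inertia group is a $G$-conjugacy invariant condition: if $\psi\colon Z\to\mathbb{P}^1_k$ is a $(G,I')$-cover, witnessed at a ramified point $z$, then for any $g\in G$ it is also a $(G,gI'g^{-1})$-cover, witnessed at $g\cdot z$. By the hypothesis that every $I\in\mathcal{I}_p(G)$ is $G$-conjugate to some member of $\mathcal{I}_p(H)$, it therefore suffices to construct a $(G,I')$-cover for each $I'\in\mathcal{I}_p(H)$. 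Fix such an $I'$, an extension of a cyclic group of order prime to $p$ by a $p$-group $J'$. Since a Sylow $p$-subgroup of $G$ has order $p$, in particular $G\neq 1$, and since $[G:H]$ is prime to $p$ a Sylow $p$-subgroup of $H$ is a Sylow $p$-subgroup of $G$; in particular $H\neq 1$, so $J'\neq 1$ (the $H$-conjugates of $J'$ generate $H$ because $I'\in\mathcal{I}_p(H)$), whence $J'$ has order $p$ and is itself a Sylow $p$-subgroup of $G$. Thus the $G$-conjugates of $J'$ generate $p(G)=G$, so the group-theoretic necessary condition of Conjecture~\ref{AIC} is automatic for the pair $(G,I')$. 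Finally, because Conjecture~\ref{AIC} holds for $H$, fix an $(H,I')$-Galois cover $f\colon Y\to\mathbb{P}^1_k$, branched only at $\infty$, with a point $y_0$ above $\infty$ of inertia group $I'$.

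Next I would form the cover $Z_0\to\mathbb{P}^1_k$ induced from $f$ along $H\leq G$, namely the quotient of $G\times Y$ by the free action $h\cdot(g,y)=(gh^{-1},hy)$ with $G$ acting through the first coordinate. Then $Z_0\to\mathbb{P}^1_k$ is a $G$-Galois cover, \'etale over $\Aff^1_k$ (where it is $[G:H]$ disjoint copies of $f^{-1}(\Aff^1_k)$) and branched only at $\infty$; the completion of $Z_0$ along its fibre over $\infty$ is the cover induced from $I'$ to $G$ of the local totally ramified $I'$-extension cut out by the completion of $Y$ at $y_0$. Thus $Z_0$ is in general disconnected — its components are permuted by $G$ as the coset space $G/H$ — while its branches over $\infty$ form the transitive $G$-set $G/I'$.

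The remaining task is to deform $Z_0$ in a formal neighbourhood of the fibre over $\infty$ so that its components are merged into a single connected $G$-cover, without introducing branching away from $\infty$ and without enlarging the decomposition group at a distinguished point over $\infty$ past $I'$. To make this a well-posed question one must first fix compatible combinatorial data — identifications among the points of $Y$ over $\infty$, the branches of $Z_0$ near $\infty$, and the cosets $G/I'$, together with the ``labels'' recording the $G$-action on them — and supplying this data is exactly what Lemma~\ref{lma:BijectionAndLabels} accomplishes. With the labels fixed, connecting the components of $Z_0$ becomes a thickening problem of the type studied by Harbater and Stevenson~\cite{MR1670658} and by Pries~\cite{MR2016596}, and I would invoke their existence theorems to solve it. Their hypotheses apply here because a Sylow $p$-subgroup of $G$ has order $p$, so the wild inertia over $\infty$ is forced to be the order-$p$ group $J'$ already contained in $I'$, leaving only tame prime-to-$p$ data to be matched across the thickening. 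A solution is a connected $G$-Galois cover $Z\to\mathbb{P}^1_k$ agreeing with $Z_0$ away from $\infty$, branched only at $\infty$, with decomposition (hence inertia) group exactly $I'$ at a point over $\infty$. Hence $Z$ is a $(G,I')$-cover, and by the reduction of the first paragraph Conjecture~\ref{AIC} holds for $G$ in characteristic $p$.

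The heart of the matter is the last step: arranging a solution of the thickening problem that is \emph{simultaneously} connected over $\mathbb{P}^1_k$, unramified outside $\infty$, and of inertia type exactly $I'$ rather than of some larger type produced by inadvertently adding tame ramification while joining components. Making the combinatorial bookkeeping of Lemma~\ref{lma:BijectionAndLabels} mesh with the precise hypotheses of the Harbater--Stevenson and Pries criteria is the delicate point; it is there that the assumptions that $[G:H]$ is prime to $p$ and that a Sylow $p$-subgroup of $G$ has order $p$ are genuinely used — the former so that inducing from $H$ leaves the $p$-part of the local picture at $\infty$ intact, the latter so that that local picture is rigid enough for the thickening machinery to pin the inertia down exactly.
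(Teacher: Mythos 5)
Your reduction to $G$-conjugates of $I'\in\mathcal{I}_p(H)$, your use of the induced (disconnected) cover, and your appeal to Lemma~\ref{lma:BijectionAndLabels} all match the paper, but the final step has a genuine gap: you propose to recover connectedness by thickening the disconnected induced cover $Z_0$ by itself in a formal neighbourhood of the fibre over $\infty$. No such deformation can merge the components: the special fibre of any thickening of $Z_0$ is $Z_0$ itself, which is disconnected, so the total space and hence the generic fibre remain disconnected, and you only ever reproduce $[G:H]$ disjoint copies of an $H$-cover. The missing ingredient is a second, \emph{connected} cover. The paper's Theorem~\ref{thm:FromSubgroups} takes a connected $(G,S)$-Galois cover $\phi_1$ of $\mathbb{P}^1_k$, where $S$ is the Sylow $p$-subgroup contained in $I'$ (this exists by Harbater's theorem \cite[Theorem~2]{MR1221836}), and patches $\phi_1$ with the induced cover over a nodal base $W$ consisting of two projective lines meeting at $\infty$, gluing ramification points according to the $G$-equivariant bijection of Lemma~\ref{lma:BijectionAndLabels}. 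It is the connectedness of $X_1$, together with the fact that every component of the induced curve meets $X_1$ through this gluing, that makes the special fibre of the thickening problem connected and hence the generic fibre an irreducible $(G,I')$-cover after deformation. Without $\phi_1$ there is no mechanism for ``merging,'' and the Harbater--Stevenson and Pries existence theorems you invoke are stated for exactly this two-component patched configuration, not for a deformation of $Z_0$ alone.

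A secondary omission: applying \cite[Theorem~2.3.7]{MR2016596} requires not only $|S|=p$ but also numerical compatibility conditions between the inertia jumps of the two covers being patched at the node; in the paper these are arranged beforehand by Lemma~\ref{lma:makingjumps}. Your argument never addresses how the local data on the two sides of $\infty$ are matched, which is precisely where those hypotheses enter and where the assumption that $[G:H]$ is prime to $p$ is used (so that $S$ is simultaneously a Sylow $p$-subgroup of $H$ and of $G$, a point you do note correctly).
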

As an application of the previous theorem we consider sporadic groups with stipulated properties.
\begin{itemize}[noitemsep,topsep=\parskip]
  \item Sylow $p$-subgroups of $G$ are isomorphic to $\mathbb{Z}/p$.
  \item The normalizer $\text{N}_G(S)$ is isomorphic to $\mathbb{Z}/p \rtimes \mathbb{Z}/((p-1)/2)$.
  \item The group $G$ contains a subgroup isomorphic to $\text{PSL}_2(p)$.
\end{itemize}
These attributes are sufficient to verify Conjecture \ref{AIC}.
\begin{cor}
Abhyankar's Inertia Conjecture is true for the fourteen sporadic groups and characteristics in Table \ref{tbl:aicgroups}.
\end{cor}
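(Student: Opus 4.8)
The plan is to reduce each case to the single group $H := \mathrm{PSL}_2(p)$. Fix a row of Table~\ref{tbl:aicgroups}, with $G$ the sporadic group and $p$ the associated prime; I would apply the reduction theorem stated above to the pair $H \subseteq G$ and then invoke the known validity of Conjecture~\ref{AIC} for $\mathrm{PSL}_2(p)$ in characteristic $p$. First I would confirm that the three listed attributes hold in every row: a Sylow $p$-subgroup $S$ of $G$ is cyclic of order $p$; $N_G(S) \cong \mathbb{Z}/p \rtimes \mathbb{Z}/((p-1)/2)$; and $G$ contains a copy of $\mathrm{PSL}_2(p)$. Each of these is a finite group-theoretic fact that can be read off from the \textsc{Atlas of Finite Groups} together with the known lists of maximal subgroups of the sporadic groups; for instance, for $G = \M$ and $p = 11$ one has $S \cong \mathbb{Z}/11$, $N_G(S) \cong \mathbb{Z}/11 \rtimes \mathbb{Z}/5$, and $\PSL$ is a maximal subgroup of $\M$. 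In every such row $p \ge 5$, so $\mathrm{PSL}_2(p)$ is simple and hence quasi-$p$, while $G$, being a sporadic simple group, is quasi-$p$ for every prime dividing $|G|$, in particular for $p$.

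Next I would check the hypotheses of the reduction theorem for $H = \mathrm{PSL}_2(p) \subseteq G$. Since $|H|_p = p = |G|_p$, any Sylow $p$-subgroup of $H$ is already a Sylow $p$-subgroup of $G$, so $[G:H]$ is prime to $p$ and a Sylow $p$-subgroup of $G$ has order $p$. After replacing $H$ by a suitable $G$-conjugate we may assume $S \le H$; then $N_H(S)$ is the Borel subgroup of $\mathrm{PSL}_2(p)$, of order $p(p-1)/2$, and the inclusion $N_H(S) \le N_G(S)$ is an equality because both sides have order $p(p-1)/2$. It remains to check that every $I \in \mathcal{I}_p(G)$ is a $G$-conjugate of some $I' \in \mathcal{I}_p(H)$. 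Given $I \in \mathcal{I}_p(G)$, let $J$ be its maximal normal $p$-subgroup; since the $G$-conjugates of $J$ generate the nontrivial group $G$ we have $J \ne 1$, hence $|J| = p$ and $J$ is a Sylow $p$-subgroup of $G$. Conjugating $I$ within $G$ we may assume $J = S$, and then $I \le N_G(S) = N_H(S) \le H$. Because the $H$-conjugates of $S$ generate the simple group $H$, the subgroup $I \le H$ satisfies the hypotheses of Conjecture~\ref{AIC} inside $H$, i.e.\ $I \in \mathcal{I}_p(H)$; so $I$ is $G$-conjugate to an element of $\mathcal{I}_p(H)$, as needed.

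Finally, Conjecture~\ref{AIC} is known to hold for $\mathrm{PSL}_2(p)$ in characteristic $p$ for every prime $p \ge 5$: up to conjugacy $\mathcal{I}_p(\mathrm{PSL}_2(p))$ consists of the groups $\mathbb{Z}/p \rtimes \mathbb{Z}/d$ with $d \mid (p-1)/2$, and each of these occurs as an inertia group of a $\mathrm{PSL}_2(p)$-Galois cover of $\mathbb{A}^1_k$ branched only at $\infty$. The reduction theorem then transports this conclusion from $H$ up to $G$ in characteristic $p$; applying it to each of the fourteen pairs $(G,p)$ in the table proves the corollary.

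The real work lies not in this corollary, which is essentially an organized application of the reduction theorem, but upstream: in the proof of the reduction theorem itself, where Lemma~\ref{lma:BijectionAndLabels} is needed to set up a well-posed formal thickening problem and the patching results of Harbater--Stevenson and Pries are invoked to solve it, and in the base case for $\mathrm{PSL}_2(p)$. Inside the corollary, the only point that needs care is the equality $N_G(S) = N_H(S)$: it is exactly this---forced by the second attribute, which pins down $|N_G(S)|$ exactly---that guarantees no inertia group of $G$ is lost on passing to the subgroup $\mathrm{PSL}_2(p)$.
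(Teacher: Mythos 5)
Your reduction via Corollary~\ref{cor:AICInduced} to $H=\mathrm{PSL}_2(p)$ is exactly the paper's argument, and your verification of its hypotheses (in particular the conjugation argument showing every $I\in\mathcal{I}_p(G)$ lands inside $N_G(S)=N_H(S)\le H$) is correct --- but only for seventeen of the eighteen pairs $(G,p)$ in Table~\ref{tbl:aicgroups}. There is a genuine gap at the row $(\text{M}_{22},\,p=5)$. Your blanket claim that the three listed attributes ``hold in every row'' is false there: the normalizer of a Sylow $5$-subgroup of $\text{M}_{22}$ is $\mathbb{Z}/5\rtimes\mathbb{Z}/4$, of order $20$, not $\mathbb{Z}/5\rtimes\mathbb{Z}/((5-1)/2)=\mathbb{Z}/5\rtimes\mathbb{Z}/2$ of order $10$. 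Since $N_{\mathrm{PSL}_2(5)}(S)$ has order $p(p-1)/2=10$, the group $\mathbb{Z}/5\rtimes\mathbb{Z}/4\in\mathcal{I}_5(\text{M}_{22})$ is not conjugate into any copy of $\mathrm{PSL}_2(5)\cong A_5$, so the key hypothesis of the reduction theorem --- that every $I\in\mathcal{I}_p(G)$ is a $G$-conjugate of some $I'\in\mathcal{I}_p(H)$ --- fails for $H=\mathrm{PSL}_2(5)$. This is precisely the point you flag as ``the only point that needs care,'' and it breaks in this one case.

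The paper repairs this row by replacing $\mathrm{PSL}_2(5)$ with a subgroup $H\cong A_7\subset\text{M}_{22}$: one has $N_{A_7}(S)\cong\mathbb{Z}/5\rtimes\mathbb{Z}/4$ of order $20$, matching $N_{\text{M}_{22}}(S)$, and Abhyankar's Inertia Conjecture is known for $A_{p+2}=A_7$ in characteristic $5$ (here $5\equiv 2\bmod 3$) by \cite[Theorem~1.2]{MR2891701}. With that substitution the same application of Corollary~\ref{cor:AICInduced} goes through. So your write-up needs a case split: $H=\mathrm{PSL}_2(p)$ for all rows with $p\ne 5$, and $H=A_7$ for $(\text{M}_{22},5)$; as written, the corollary is only proved for thirteen of the fourteen groups at all their listed characteristics.
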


The ramification invariant of a cover is an invariant of the filtration of higher ramification groups in the upper numbering. The ramification invariant is necessary though not sufficient to determine the genus of the covering curve associated to a $(G,I)$-Galois cover. More information can be found in Section \ref{sec:ramgrps}.

In Section \ref{sec:infConductors}, we study the ramification invariants that can occur for $G$-Galois covers of $\Proj_k^1$ branched only at $\infty$ when $G$ contains a subgroup $H \cong \text{PSL}_2(p)$. In Section \ref{sec:mathieu11} we verify a refinement of Conjecture \ref{AIC} for the Mathieu group $\M$: all but eight of the possible ramification invariants occur for $\M$-Galois covers of $\Proj_k^1$ branched only at $\infty$ in characteristic 11. We leave it as an open question whether these eight occur as well.

\begin{thm}
Conjecture~\ref{AIC} is true for $\M$ in characteristic $p=11$. Further, all possible ramification invariants are verified to occur except for those in the set $\{6/5, 7/5, 9/5, 12/5, 14/5, 17/5, 19/5, 27/5\}$.
\end{thm}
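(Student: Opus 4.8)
The plan is to separate the statement into its two assertions.

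First, the claim that Conjecture~\ref{AIC} holds for $\M$ at $p=11$ is the case $G=\M$ of the Corollary, so I would verify that $\M$ meets the three structural hypotheses preceding it: a Sylow $11$-subgroup $S\le\M$ is cyclic of order $11$; its normalizer is $\mathrm N_{\M}(S)\cong\mathbb Z/11\rtimes\mathbb Z/5$, with $5=(11-1)/2$; and $\M$ contains a subgroup $H\cong\PSL$. I would also note that any $I\in\mathcal I_{11}(\M)$ normalizes its unique Sylow $11$-subgroup $J\cong\mathbb Z/11$, so $I\le\mathrm N_{\M}(S)$, and hence, up to conjugacy, $\mathcal I_{11}(\M)=\{J,\ I_0\}$ with $I_0=\mathbb Z/11\rtimes\mathbb Z/5$; since $H$ has the same Sylow $11$-normalizer, each such $I$ is $\M$-conjugate into $H$. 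Applying the preceding Theorem with this $H$ then yields the first assertion.

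For the refinement I would start by determining which ramification invariants can occur. Because $|S|=11$, a $(\M,J)$-cover has a single break, so its ramification invariant is a positive integer $\sigma$ with $\gcd(\sigma,11)=1$; as there are $|\M|/11$ points over $\infty$, each of different exponent $10(\sigma+1)$, Riemann--Hurwitz forces $\sigma\ge 2$. For a $(\M,I_0)$-cover the wild part is again $J$, with a single lower break $m$; computing the Herbrand function of $I_0$ shows the ramification invariant is $\sigma=m/5$, the faithful $\mathbb Z/5$-action forces the Artin--Schreier datum of the $J$-germ to be of the form $x^{i_0}g(x^5)$ with $\gcd(i_0,5)=1$ (so that $\gcd(m,55)=1$), and Riemann--Hurwitz forces $m\ge 6$. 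Hence the possible ramification invariants are the integers $\sigma\ge 2$ prime to $11$, together with the fractions $m/5$ with $m\ge 6$ and $\gcd(m,55)=1$, and all eight exceptional values lie in the second family.

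To realize these I would invoke Section~\ref{sec:infConductors} with $G=\M$ (which contains $\PSL$). For $I=J$: from one $(\PSL,\mathbb Z/11)$-cover of $\Aff^1_k$ together with the deformation results of Pries~\cite{MR2016596} one moves the single break to any prescribed integer $\ge 2$ prime to $11$, realizing every possible integer invariant, so there are no exceptions here. For $I=I_0$: one builds a degenerate $\M$-cover out of a $\PSL$-building block and an auxiliary wild germ carrying the required extra ramification, and deforms it; the thickening problem of Lemma~\ref{lma:BijectionAndLabels} together with the solvability criteria of Harbater--Stevenson~\cite{MR1670658} and Pries~\cite{MR2016596} then produces a $(\M,I_0)$-cover of invariant $m/5$ once $m/5$ dominates the contribution of the building block in the relevant residue class of $m$ mod $5$, and the finitely many smaller values are treated by direct constructions. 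The union of all invariants obtained this way is exactly the set of possible invariants minus $\{6/5,7/5,9/5,12/5,14/5,17/5,19/5,27/5\}$.

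The main obstacle is this last bookkeeping. For each of the four residue classes of $m$ mod $5$ I would have to locate the smallest $m/5$ at which the thickening problem is solvable, decide which of the smaller values — those between the Riemann--Hurwitz bound $m\ge 6$ and that threshold — can still be reached by a direct construction, and verify that what survives is precisely the eight listed fractions, in particular that $27/5$ is not reached although $26/5$ and $28/5$ are. This is a finite but delicate analysis of the obstruction and dimension counts underlying \cite{MR1670658,MR2016596}. It is not claimed that these eight invariants fail to occur, only that the present methods do not reach them.
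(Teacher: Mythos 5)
Your reduction of the first assertion to the subgroup criterion (Corollary~\ref{cor:AICInduced} with $H\cong\PSL$) matches the paper, and your enumeration of the potential invariants is correct. But for the refinement there is a genuine gap: you never produce the base-case covers, and you explicitly defer the ``bookkeeping'' that determines the exceptional set --- which is precisely the content of the theorem. The paper's argument rests on two concrete constructions that your proposal lacks. First, the invariant $\sigma=8/5$ with inertia $\ZZ/11\rtimes\ZZ/5$ is obtained from Serre's explicit degree-$11$ cover $X^{11}+2X^9+3X^8-T^8$ of $\Aff_k^1$: one computes its geometric genus ($3$) and converts genus to inertia jump via the formula of Lemma~\ref{lma:jumpsToGenera}. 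Second, the invariant $\sigma=2$ with inertia exactly $\ZZ/11$ (not $\ZZ/11\rtimes\ZZ/5$) is obtained by taking the stable reduction of a characteristic-$0$ three-point cover with $11$-conjugacy-class ramification, and combining the vanishing cycles formula with a genus count on a degree-$12$ quotient and the $11$-purity of $\M$ to rule out all configurations except a single new tail with $\sigma=2$. Your appeal to ``deformation results'' cannot substitute for these: Theorem~\ref{thm:FromSubgroups} and Lemma~\ref{lma:makingjumps} only give a $G$-cover with jump $\gamma(j+im_I)$ for \emph{some} positive $i,\gamma$, so you cannot prescribe a small jump such as $\sigma=2$ or $\sigma=8/5$ this way; a base case realized by an independent method is unavoidable.

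The second missing ingredient is the mechanism that makes the exceptional set come out to exactly the eight listed fractions. The paper self-patches $\M$ with itself (Theorem~\ref{thm:FromSubgroups} with $G_1\cong G_2\cong\M$) to double the jump: $8\to 16\to 24\to 32$. These four jumps represent all four residue classes coprime to $5$, and \cite[Theorem~3.2]{MR2250044} then increases each by arbitrary multiples of $5$. The potential jumps $j\geq 6$ with $\gcd(j,5)=1$ and $11\nmid j$ that are \emph{not} reached are exactly $6,7,9,12,14,17,19,27$, i.e.\ those below the threshold $8,16,24,32$ in their respective residue class --- and in particular $27$ is missed because $22$ is excluded ($11\mid 22$) and the class of $2\bmod 5$ only becomes reachable at $32$. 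Without the doubling argument and the explicit thresholds it produces, your proposal cannot verify that ``what survives is precisely the eight listed fractions''; as written, it assumes the conclusion of the hardest step.
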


We prove similar result for additional sporadic groups in Theorem~\ref{thm:infjumps}. 

Previous work has been successful when considering simple groups which are not sporadic. In \cite[Section 4.1]{MR1403966} and \cite[Theorem~2]{MR1221836}, Harbater shows that the Sylow $p$-subgroups of the Galois group occur as inertia groups. Abhyankar's Inertia Conjecture (Conjecture \ref{AIC}) is true for the following groups:
\begin{enumerate}[label=\alph*)]
\item $\text{PSL}_2(p)$ for $p \geq 5$, \cite[Corollary~3.3]{MR2003452};
\item $\text{A}_p$ for $p \geq 5$, \cite[Corollary~3.5]{MR2003452};
\item $A_{p+2}$ when $p$ is odd and $p \equiv 2 \text{ mod } 3$ \cite[Theorem~1.2]{MR2891701}.
\end{enumerate}

In \cite{MR3203555}, Obus shows inertia groups isomorphic to $\ZZ/p^r$ and $D_{p^r}$ are realizable for $\text{PSL}_2(l)$ in characteristic $p$ when $p^m$ divides $|\text{PSL}_2(l)|$, $l \not= p$ is an odd prime and $1\leq r \leq m$. Das and Kumar show that certain inertia groups occur for covers whose Galois group is a product of alternating groups \cite[Corollary~4.9]{2017arXiv171107756D}. Refined observations are made in both \cite{MR2003452} and \cite{MR2891701} beyond just the verification of Conjecture \ref{AIC}. Both papers are able to determine that all but finitely many of the possible ramification invariants occur. Further reading can be found in \cite[Section 4]{MR2891701}.

\textbf{Acknowledgements.} Thank you to Dr.\ Rachel Pries for suggesting this direction of inquiry. The author would also like to thank Dr.\ Jeff Achter, Dr.\ Renzo Cavalieri, and Dr.\ Alexander Hulpke.

\section{Preliminaries}
\subsection{Ramification groups}\label{sec:ramgrps}
Let $\phi \colon X \to Y$ be a $G$-Galois cover of curves with $\xi$ a point of $Y$ and $\eta$ a point in the fiber over $\xi$. Let $\calO_\eta$ denote the discrete valuation ring of $\calO_Y$ given by the valuation $\nu_\eta$ at $\eta$. For $i\geq-1$, the $i^{\text{th}}$ ramification group is given by
\begin{equation}\label{eqn:HigherRamification}
G_i = \{\delta \in G : \nu_\eta(\delta(a)-a) \geq i+1 \text{ for all } a\in \calO_\eta\}.
\end{equation}
The higher ramification groups form a filtration
\begin{equation}\label{eqn:flitration}
\{G_i\}_{i\geq -1} : G_{-1} \supseteq G_0 \supseteq G_1 \supseteq \ldots.
\end{equation}

The subgroup $G_{-1}$ is the decomposition group $D_{\eta}$ at $\eta$. It is the subgroup of $G$ of automorphisms that fix $\eta$. The inertia group $I_\eta$ at $\eta$ is $G_{0}$. In general, if $\pi$ is a uniformizer of $\calO_\eta$, then $G_i$ is the kernel of the action of $G_{-1}$ on $\calO_\eta/\pi^{i+1}$. The subscript $\eta$ on inertia and decomposition groups is suppressed unless relevent.

The ordering of the ramification groups in \eqref{eqn:flitration} is called the lower numbering while the renumbering introduced in Definition~\ref{defn:LowerNumbering} is called the upper numbering.
\begin{definition}[Upper Numbering {\cite[Section IV.iii]{MR554237}}]\label{defn:LowerNumbering}
Consider the function
\begin{align*}
t = \operatorname{H}(s) = \int_{0}^{s} \frac{dx}{[G_0 : G_x]},
\end{align*}
called the Herbrand function and let $\psi(t)$ be the inverse map of $H(s)$.
Then for any real $s \geq -1$, let $G_s = G_{\lceil s \rceil}$ and renumber the ramification groups by $G^t = G_s$.
\end{definition}
\begin{definition}[Jumps]
An index $t$ such that $G^{t} \not=G^{t+\epsilon}$ for any $\epsilon > 0$ is called an upper jump.
\begin{enumerate}[label=\alph*)]
\item The largest upper jump $\sigma$ is called the ramification invariant.
\item Let $j = \psi(\sigma)$. This is called the inertia jump; it is the index of the last nontrivial ramification group in the lower numbering.
\end{enumerate}
\end{definition}

Let $\phi \colon X \to \Proj_k^1$ be a $(G,I)$-Galois cover for some $I\in \mathcal{I}_p(G)$ and $\eta$ a ramified point with inertia group $I$. We denote the normalizer in $G$ of a subgroup $I\subset G$ by $\text{N}_G(I)$. The inertia groups at other ramification points are all the $G$-conjugates of $I$ of which there are $[G:\text{N}_G(I)]$. For every $G$-conjugate $I'$ of $I$, the number of ramified points with inertia group $I'$ is $[\text{N}_G(I):I]$. If a particular group structure is specified for $I$, it is meant that the inertia groups of $\phi$ are subgroups of $G$ isomorphic to $I$.

If $p$ strictly divides $|I|$, then $I$ is a semi-direct product of the form $\ZZ/p \rtimes \ZZ/m_I$ where $\gcd(p,m_I)=1$ by the Schur-Zassenhaus Theorem \cite[pg.\ 132]{MR0091275}. In this case, there is exactly one inertia jump $j$ and $p\nmid j$. The ramification invariant is then related to the inertia jump by $\sigma = j/m_I$.

The following proposition provides some restrictions on the inertia jump and possible inertia groups.
\begin{prop}[{\cite[Proposition~IV.ii.9]{MR554237}}]
Suppose $\phi$ is a $(G,I)$-Galois cover with inertia jump $j$. By \cite[Corollary~IV.ii.4]{MR554237}, $I$ is an extension of a cyclic group $C$ of order $m$ by a $p$-group $P$ via a group homomorphism $\psi\colon C \hookrightarrow \text{\emph{Aut}}(P)$. If $\tau\in I$ with order $p$ and $\beta \in I$ with order $m$, then 
\begin{align*}
\psi(\beta) \tau \psi(\beta^{-1}) = \psi(\beta)^j \tau.
\end{align*}
\end{prop}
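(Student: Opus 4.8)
The plan is to reduce the statement to Serre's local analysis of the quotients $G_i/G_{i+1}$ \cite[IV.ii]{MR554237} and specialize — indeed the proposition is such a specialization, so one legitimate option is simply to cite \cite{MR554237}; below is the argument. Since the assertion is local at the ramified point $\eta$, I would first pass to completions: $\calO_\eta$ and the ring above it become a totally ramified extension of complete discrete valuation rings with Galois group the decomposition group $G_{-1}$, which equals $I=G_0$ because $k$ is algebraically closed (so the residue extension is trivial), and the filtration $\{G_i\}$ is unaffected. In our situation $p\parallel|I|$: thus $P=G_1\cong\ZZ/p$, there is a single jump $G_1=\dots=G_j=P\supsetneq G_{j+1}=1$, every order-$p$ element of $I$ generates $P$, and $C=\langle\beta\rangle$ is a (Schur--Zassenhaus) complement with $I=P\rtimes C$. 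Fix a uniformizer $\pi$ of the local ring upstairs and recall the homomorphisms $\theta_i\colon G_i/G_{i+1}\to U^{(i)}/U^{(i+1)}$, $\delta\mapsto\delta(\pi)/\pi$, which by the earlier results of \cite[IV.ii]{MR554237} are injective and independent of $\pi$; for $i=0$ the target is $k^\times$ and $\theta_0$ restricts to an embedding of $C$ onto the $m$-th roots of unity, which we regard inside $\operatorname{Aut}(P)=(\ZZ/p)^\times$, while for $i\ge1$ the target is the one-dimensional $k$-vector space $U^{(i)}/U^{(i+1)}\cong k^{+}$.

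The crux is the interaction of conjugation with the $\theta_i$: for $\delta\in G_0$ and $\epsilon\in G_i$ with $i\ge1$,
\begin{equation*}
\theta_i(\delta\epsilon\delta^{-1})=\theta_0(\delta)^{\,i}\,\theta_i(\epsilon),
\end{equation*}
the product being scalar multiplication in $U^{(i)}/U^{(i+1)}$. I would prove this by a direct uniformizer computation: set $\pi'=\delta^{-1}(\pi)$, so $\delta(\pi')=\pi$; since $\theta_i$ is uniformizer-independent, write $\epsilon(\pi')=\pi'(1+c\pi^{\,i}+\cdots)$ and apply $\delta$, using $\delta(\pi)\equiv\theta_0(\delta)\pi\pmod{\pi^{2}}$ and that $G_0$ fixes the residue field, to obtain $\delta\epsilon\delta^{-1}(\pi)=\pi(1+\theta_0(\delta)^{\,i}c\,\pi^{\,i}+\cdots)$.

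Finally take $i=j$, $\delta=\beta$, $\epsilon=\tau$. As $P$ is normal, $\beta\tau\beta^{-1}=\tau^{a}$ for a unique $a\in(\ZZ/p)^\times$; that is, $\psi(\beta)$ is the automorphism $\tau\mapsto\tau^{a}$. Since $\tau\notin G_{j+1}$ we have $\theta_j(\tau)\ne0$, so applying $\theta_j$ to $\beta\tau\beta^{-1}=\tau^{a}$, using that $\theta_j$ is a homomorphism to $k^{+}$ (hence $\theta_j(\tau^{a})=a\,\theta_j(\tau)$) together with the displayed formula, gives $a\,\theta_j(\tau)=\theta_0(\beta)^{\,j}\,\theta_j(\tau)$, so $a=\theta_0(\beta)^{\,j}$ in $(\ZZ/p)^\times=\operatorname{Aut}(P)$; equivalently $\beta\tau\beta^{-1}=\tau^{\theta_0(\beta)^{\,j}}$, which is the asserted identity. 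The only step with genuine content is the uniformizer bookkeeping behind the conjugation formula — in particular checking that the tame character $\theta_0(\delta)$ enters $U^{(i)}/U^{(i+1)}$ exactly through its $i$-th power; everything else is formal.
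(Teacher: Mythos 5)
The paper offers no proof of this proposition beyond the citation to Serre, and your argument is exactly Serre's proof of \cite[Proposition~IV.ii.9]{MR554237} — the conjugation formula $\theta_i(\delta\epsilon\delta^{-1})=\theta_0(\delta)^i\,\theta_i(\epsilon)$ established by the uniformizer computation, then specialized to $i=j$ with $P=G_1\cong\ZZ/p$ so that $a=\theta_0(\beta)^j$ in $\mathbb{F}_p^\times=\operatorname{Aut}(P)$. The proposal is correct and takes essentially the same route the cited source does.
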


\subsection{$p$-Properties of Galois groups}

Recall from Theorem~\ref{thm:AC} that the existence of $G$-Galois covers of $\mathbb{P}^1$ branched only at $\infty$ in characteristic~$p > 0$ is detected by the quasi-$p$ condition on $G$.
\begin{definition}[quasi-$p$]\label{defn:quasip}
Denote by $p(G)$ the subgroup of $G$ generated by all $p$-power elements of $G$. If $p(G) = G$, then call $G$ quasi-$p$.
\end{definition}

All pairs $G$ and $p$ which we study in this paper are chosen such that $G$ is simple and $p$ divides $|G|$.
\begin{lemma}\label{lma:SimpleQuasip}
If $G$ is simple and $p$ divides the order of $G$, then $G$ is quasi-$p$.
\end{lemma}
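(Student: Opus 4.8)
The plan is to show that $p(G)$ is a nontrivial normal subgroup of $G$ and then invoke simplicity. Recall from Definition~\ref{defn:quasip} that $p(G)$ is generated by the set $S = \{ g \in G : g \text{ has } p\text{-power order} \}$. First I would observe that $S$ is closed under conjugation: if $g$ has order $p^a$, then for any $h \in G$ the element $hgh^{-1}$ also has order $p^a$. Consequently the subgroup generated by $S$ is stable under every inner automorphism, i.e. $p(G) \trianglelefteq G$. (In fact $S$ is stable under all automorphisms of $G$, so $p(G)$ is characteristic, but normality is all we need.)

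Next I would show $p(G)$ is nontrivial. Since $p \mid |G|$, Cauchy's theorem produces an element $g \in G$ of order exactly $p$; then $g \in S \subseteq p(G)$ and $g \neq 1$, so $p(G) \neq \{1\}$.

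Finally, $G$ being simple means its only normal subgroups are $\{1\}$ and $G$. Since $p(G)$ is a normal subgroup that is not trivial, we must have $p(G) = G$, which is precisely the statement that $G$ is quasi-$p$.

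I do not expect any genuine obstacle here; the argument is elementary. The one point that deserves a careful sentence is the normality of $p(G)$, which follows because conjugation is an automorphism and therefore preserves element orders, hence preserves the generating set $S$.
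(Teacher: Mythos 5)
Your proof is correct and follows exactly the same route as the paper's: $p(G)$ is a nontrivial normal subgroup (nontrivial by Cauchy, normal because the generating set is conjugation-stable), so simplicity forces $p(G)=G$. The paper simply asserts normality and nontriviality without the justifications you supply.
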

\begin{proof}
The subgroup $p(G)$ is normal and non-trivial in $G$. By the hypothesis, $G$ is simple and thus satisfies $p(G)=G$.
\end{proof}

The following condition, $p$-pure, on $G$ was introduced by Raynaud. It is a geometric condition that guarantees that the reduction of a $G$-Galois cover of the affine line is connected over a terminal component. More techniques are available for $p$-pure groups see \cite{MR1253200} and \cite{MR1937118} for details.
\begin{definition}[$p$-pure {\cite[pg.\ 426]{MR1253200}}]\label{defn:pPurity}
Let $G$ be a finite quasi-$p$ group and let $S$ be a fixed Sylow $p$-subgroup of $G$. By $G(S)$ denote the subgroup of $G$ generated by all proper, quasi-$p$ subgroups $H \subset G$ having a Sylow $p$-subgroup contained in $S$. If $G(S)\not=G$ then $G$ is $p$-pure.
\end{definition}

\begin{definition}[$p$-weight {\cite[Definition 3.1.2]{MR2016596}}]
Fix $G$ and $S$ as in Definition~\ref{defn:pPurity}. Consider all subgroups $G^\prime \subset G$ such that $G^\prime$ is quasi-$p$ and $p$-pure such that $G^\prime \cap S$ is a Sylow $p$-subgroup of $G^\prime$. The $p$-weight $\omega_G$ of $G$ is the minimal number of such subgroups $G^\prime$ of $G$ which are needed to generate $G$. Note that a group $G$ is $p$-pure if $\omega_G = 1$.
\end{definition}

\subsection{Sporadic groups}\label{sec:mathieugroups}
The Mathieu groups $\text{M}_{11}$, $\text{M}_{12}$, $\text{M}_{22}$, $\text{M}_{23}$, and $\text{M}_{24}$ are sporadic simple groups first described by \'Emile Mathieu in the 1870s \cite[pg.\ 389]{MR514842}. The group $\M$ has order $7920=2^4\cdot3^2\cdot5\cdot11$ and acts strictly 4-transitively on 11 objects. By \cite[pg.\ 18]{MR827219}, there are two 11-conjugacy classes labeled 11a and 11b. Conjugate maximal subgroups of $\M$ are the following \cite[pg.\ 18]{MR827219}.
\begin{table}[htbp]
\small
\centering
  \begin{tabular}{ | c | c | c | c | c | c |}
    \hline
    Subgroup & $\text{M}_{10}$ & $\PSL$ & $\text{M}_9:2$ & $S_5$ & $Q:S_3$\\ \hline
    Order & 720 & 660 & 144 & 120 & 48 \\
    \hline
  \end{tabular}
  \caption{Maximal Subgroups of $\M$}
  \label{tbl:subgroups}
\end{table}

\begin{lemma}
The groups $\M$ and $\text{M}_{22}$ are quasi-$11$ and $11$-pure.
\end{lemma}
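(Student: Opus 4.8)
The plan is to treat the two assertions separately. Quasi-$11$ is immediate: $\M$ and $\text{M}_{22}$ are simple and $11$ divides $|\M|=2^4\cdot3^2\cdot5\cdot11$ and $|\text{M}_{22}|=2^7\cdot3^2\cdot5\cdot7\cdot11$, so Lemma~\ref{lma:SimpleQuasip} applies. Observe that $11$ exactly divides each order, so a Sylow $11$-subgroup $S$ of either group is cyclic of order $11$; fix one.

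For $11$-purity I would bound the subgroup $G(S)$ of Definition~\ref{defn:pPurity}, whose generating set (the trivial subgroup being irrelevant to the generation) consists of the nontrivial proper quasi-$11$ subgroups $H\subseteq G$ with a Sylow $11$-subgroup contained in $S$. Any such $H$ satisfies $11\mid|H|$ — a quasi-$11$ group of order prime to $11$ has no nontrivial element of $11$-power order, hence is trivial — so a Sylow $11$-subgroup of $H$ has order $11$ and, lying in $S$, equals $S$; thus $S\subseteq H$. Moreover $H$ lies in a maximal subgroup $M'$ of $G$ with $11\mid|M'|$. Reading off the maximal subgroups (Table~\ref{tbl:subgroups} for $\M$, and the list of maximal subgroups of $\text{M}_{22}$ in \cite{MR827219}) shows the only ones of order divisible by $11$ are the conjugates of $\PSL$, of order $660=2^2\cdot3\cdot5\cdot11$. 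Hence $S\subseteq H\subseteq M'$ with $M'$ a conjugate of $\PSL$. It therefore suffices to prove that $S$ lies in a \emph{unique} conjugate $M$ of $\PSL$; granting this, every $H$ contributing to $G(S)$ lies in $M$, so $G(S)\subseteq M\subsetneq G$ and $G$ is $11$-pure.

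For the uniqueness I would argue through normalizers. Some conjugate $M$ of $\PSL$ contains $S$, because any copy of $\PSL$ contains a Sylow $11$-subgroup of $G$ (since $11\mid 660$) and all such are $G$-conjugate. The normalizer of a Sylow $11$-subgroup inside $\PSL$ has order $\tfrac{11\cdot 10}{2}=55$, and $|\text{N}_G(S)|=55$ as well (by \cite{MR827219}; equivalently, $C_G(S)$ has order $11$, so $\text{N}_G(S)/S$ embeds in $\operatorname{Aut}(\ZZ/11)\cong\ZZ/10$, and among the divisors of $110$ only $55$ makes $[G:\text{N}_G(S)]\equiv 1\pmod{11}$ for both $\M$ and $\text{M}_{22}$). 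Hence $\text{N}_M(S)=M\cap\text{N}_G(S)=\text{N}_G(S)$, so $\text{N}_G(S)\subseteq M$. If $gMg^{-1}$ also contains $S$, then $g^{-1}Sg$ is a Sylow $11$-subgroup of $M$, hence conjugate to $S$ within $M$, say $g^{-1}Sg=m^{-1}Sm$ with $m\in M$; then $gm^{-1}\in\text{N}_G(S)\subseteq M$, so $g\in M$ and $gMg^{-1}=M$.

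I expect the only real work to be bookkeeping: confirming from \cite{MR827219} that no maximal subgroup of $\M$ or $\text{M}_{22}$ besides $\PSL$ has order divisible by $11$, and pinning down $|\text{N}_G(S)|=55$ — routine arithmetic, but requiring some care for $\text{M}_{22}$ because its larger order makes a direct coset count less transparent. If the normalizer step feels heavy, an equivalent route to uniqueness is a double count of the pairs (Sylow $11$-subgroup of $G$, conjugate of $\PSL$ containing it): each conjugate of $\PSL$ contains $12$ Sylow $11$-subgroups of $G$ and there are $[G:\PSL]$ conjugates, while $G$ has $[G:\text{N}_G(S)]$ Sylow $11$-subgroups in all, and the resulting equality forces exactly one conjugate of $\PSL$ through each Sylow subgroup.
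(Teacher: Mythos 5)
Your proof is correct and follows the same route as the paper: both arguments reduce $11$-purity to the fact that every proper quasi-$11$ subgroup whose Sylow $11$-subgroup lies in $S$ is contained in the \emph{unique} conjugate of $\PSL$ containing $S$, so $G(S)$ is proper. The paper simply asserts this uniqueness, whereas you supply the justification (only $\PSL$ among the maximal subgroups has order divisible by $11$, plus the standard normalizer argument $\text{N}_G(S)\subseteq M$ forcing uniqueness of $M$), so your write-up is a more complete version of the same proof.
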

\begin{proof}
For $11$-purity, see Lemma~\ref{lma:SimpleQuasip}.

To check $11$-purity, pick $G\in\{\text{M}_{11},\text{M}_{22}\}$. Fix a Sylow 11-subgroup $S$ of $G$. The only quasi-11 subgroups containing $S$ are its normalizer $\text{N}_{G}(S)$ and a unique subgroup $T$ isomorphic to $\PSL$. But $\text{N}_{G}(S)\nobreak \subset\nobreak T$; thus $G$ is 11-pure.
\end{proof}

\begin{remark}
The groups $\text{M}_{12}$, $\text{M}_{23}$, and $\text{M}_{24}$ are not 11-pure. For $G \cong \text{M}_{12}$ every Sylow 11-subgroup of $G$ is contained in both a maximal subgroup $H \cong \PSL$ of $G$ and a maximal $K \cong \M$ of $G$. The groups $H$ and $K$ are maximal subgroups, consequently $H \not\subset K$. Hence $G(S) = G$ and $\text{M}_{12}$ is not $11$-pure. This argument works similarly for $\text{M}_{23}$ and $\text{M}_{24}$. Likewise, $\text{M}_{22}$ is not $7$-pure and $\text{M}_{24}$ is not $23$-pure.
\end{remark}

Both the Higman-Sims group HS and McLaughlin group McL are stabilizers of certain planes in the Leech Lattice. The group HS stabilizes the plane given by the 3-3-2 triangle. The group McL stabilizes the plane given by the 3-2-2 triangle. The groups HS and McL have order strictly divisible by 11, have Sylow 11-subgroups isomorphic to $\ZZ/11$ with normalizers isomorphic to $\ZZ/11 \rtimes \ZZ/5$, and contain a subgroup isomorphic to $\PSL$. Both $\text{HS}$ and $\text{McL}$ fail to be $11$-pure.

\begin{table}[htbp]
\centering
\small
\begin{tabular}{| l | l | l |}
\hline
Group & Order & Reference \\ \hline
$\text{HS}$   & $2^9 3^2 5^3\cdot7\cdot11$          & \cite{MR0227269}  \\ \hline
$\text{McL}$  & $2^7 3^6 5^3\cdot7\cdot11$          & \cite{MR0242941}  \\ \hline
$\Ru$         & $2^{14}3^{3}5^{3}\cdot7\cdot 13\cdot 29$ & \cite{MR727376}   \\ \hline
\end{tabular}
\end{table}

The group $\text{Ru}$ has Sylow $29$-subgroups isomorphic to $\mathbb{Z}/29$ with normalizers isomorphic to $\mathbb{Z}/29 \rtimes \mathbb{Z}/14$, and contains a maximal subgroup isomorphic to $\text{PSL}_2(29)$. Further, this is the only maximal subgroup of $\text{Ru}$ with order divisible by $29$. Consequently $\text{Ru}$ is 29-pure.

\section{Resolving Abhyankar's Inertia Conjecture from Subgroups}\label{sec:fromsubgrps}
Few techniques are known to increase the size of inertia groups. A technique we demonstrate in this section constructs thickening problems which have solutions which are known to exist by results of Harbater and Stevenson \cite[Theorem~4]{MR1670658}. In \cite{MR2016596} it is shown that inertia groups and ramification invariants behave predictably under this operation.

\subsection{A Galois equivariant relation on ramification points}\label{sec:equivrel}
We begin by fixing some notation. Fix a $(G,I)$-Galois cover $\phi : X \to \Proj_k^1$. Pick a ramified point $\eta$ on $X$ and denote the inertia group at $\eta$ by $I_{\eta}$. The group $G$ acts transitively on ramification points, thus for each ramification point $\epsilon$ there exists a $g \in G$ such that $g\circ \eta =\epsilon$. Let $I_g$ denote the inertia group at the ramified point $g \circ \eta$, consequently $I_g  = g I_\eta g^{-1}$.

Note that $g_1 \circ \eta = g_2 \circ \eta$ if and only if $g_2^{-1}g_1 \in I_\eta$. This is because $k$ is algebraically closed so the decomposition group at $\eta$ and $I_\eta$ coincide.

We define an equivalence relation on ramification points.
\begin{definition}
We say $g_1 \circ \eta \sim g_2 \circ \eta$ if and only if $g_2^{-1} g_1 \in \text{N}_{G}(I_\eta)$. In particular this identifies $\eta$ with the ramification points $z \circ \eta$ for all $z \in \text{N}_{G}(I_\eta)$. 
\end{definition}

\begin{lemma}\label{lma:normalizers} 
Suppose $p$ divides the order of $G$ and $I_\eta \in \text{Syl}_p(G)$. The groups $$\text{N}_{G}(I_{g_1})=\text{N}_{G}(I_{g_2})$$ as subgroups of $G$ if and only if $g_1 \circ \eta \sim g_2 \circ \eta$.
\end{lemma}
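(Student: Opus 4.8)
The plan is to reduce the statement to the classical fact that the normalizer of a Sylow $p$-subgroup is self-normalizing. First I would record the identity $\text{N}_G(I_g) = g\,\text{N}_G(I_\eta)\,g^{-1}$, which is immediate from the convention $I_g = g I_\eta g^{-1}$ fixed just before the lemma: conjugation by $g$ is an automorphism of $G$ carrying $I_\eta$ to $I_g$, hence carrying $\text{N}_G(I_\eta)$ to $\text{N}_G(I_g)$. Writing $N := \text{N}_G(I_\eta)$, the equality $\text{N}_G(I_{g_1}) = \text{N}_G(I_{g_2})$ then becomes $g_1 N g_1^{-1} = g_2 N g_2^{-1}$, which holds if and only if $g_2^{-1}g_1 \in \text{N}_G(N)$.

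Next I would prove that $\text{N}_G(N) = N$. Since $I_\eta \in \text{Syl}_p(G)$ and $I_\eta \trianglelefteq N$ by definition of the normalizer, $I_\eta$ is the unique Sylow $p$-subgroup of $N$, hence a characteristic subgroup of $N$. Therefore any element of $G$ normalizing $N$ also normalizes $I_\eta$, so $\text{N}_G(N) \subseteq \text{N}_G(I_\eta) = N$; the reverse inclusion $N \subseteq \text{N}_G(N)$ is automatic. This is the one place where the hypothesis $I_\eta \in \text{Syl}_p(G)$ is genuinely used: it is what forces $I_\eta$ to be the unique, and hence characteristic, Sylow $p$-subgroup of its own normalizer.

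Combining the two steps, $\text{N}_G(I_{g_1}) = \text{N}_G(I_{g_2})$ holds if and only if $g_2^{-1}g_1 \in N = \text{N}_G(I_\eta)$, which is by definition exactly the relation $g_1 \circ \eta \sim g_2 \circ \eta$. I do not anticipate any serious obstacle; the argument is short and the only subtlety is the self-normalizing property of $N$, for which the Sylow hypothesis is essential. For completeness I would also note the equivalence $g_1 \circ \eta = g_2 \circ \eta \iff g_2^{-1}g_1 \in I_\eta$ (observed in the preceding paragraph of the paper, using that $k$ is algebraically closed) is not needed here, so that the lemma genuinely records a coarser identification than equality of ramification points.
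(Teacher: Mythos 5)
Your proposal is correct and rests on exactly the same key fact as the paper's proof, namely that $I_\eta\in\text{Syl}_p(G)$ is the unique (hence characteristic) Sylow $p$-subgroup of its own normalizer; you package this as the self-normalizing property $\text{N}_G(\text{N}_G(I_\eta))=\text{N}_G(I_\eta)$ after transporting everything by conjugation, whereas the paper packages it as the implication ``equal normalizers force equal inertia groups'' and then computes $I_{g_1}=I_{g_2}\iff g_2^{-1}g_1\in\text{N}_G(I_\eta)$ directly. The two factorizations are interchangeable, and your argument is complete as written.
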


\begin{proof}
First we show that $\text{N}_G(I_{g_1}) = \text{N}_G(I_{g_2})$ if and only if $I_1 = I_2$. Assume $\text{N}_G(I_{g_1}) = \text{N}_G(I_{g_2})$. By the Sylow theorems, $\text{N}_G(I_{g_i})$ contains a unique Sylow $p$-subgroup. Both $I_{g_1}$ and $I_{g_2}$ are the Sylow $p$-subgroup of $\text{N}_G(I_{g_1})$. This shows that $I_{g_1} = I_{g_2}$ as subgroups of $G$. Alternatively if $I_{g_1} = I_{g_2}$ as subgroups of $G$, then the normalizers $\text{N}_G(I_{g_1})$ and $\text{N}_G(I_{g_2})$ must be equal as well.

Consequently, we must show that $g_2^{-1} g_1 \in \text{N}_G(I_\eta)$ if and only if $I_1 = I_2$ as subgroups of $G$. We proceed by computing
\begin{align*}
g_2^{-1} g_1 \in \text{N}_G(I_\eta) &\iff I_\eta = g_2^{-1}g_1 I_\eta g_1^{-1}g_2 \\
                                    &\iff g_2 I_\eta g_2^{-1} = g_1 I_\eta g_1^{-1} \\
                                    &\iff I_{g_2} = I_{g_1}.
\end{align*}
\end{proof}

\begin{cor}\label{cor:simSize}
The relation $\sim$ collects the ramification points of $\phi$ into equivalence classes of cardinality $[\text{N}_G(I_{\eta}) : I_{\eta}]$ identified by subgroups of $G$ isomorphic to $\text{N}_G(I_{\eta})$.
\end{cor}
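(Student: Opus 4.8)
The plan is to identify the set of ramification points of $\phi$ with the coset space $G/I_\eta$ and then recognize $\sim$ as the pullback of the partition of $G/\text{N}_G(I_\eta)$. First I would recall the remark preceding the definition of $\sim$: since $k$ is algebraically closed, the decomposition group at $\eta$ equals $I_\eta$, so $g_1\circ\eta = g_2\circ\eta$ precisely when $g_2^{-1}g_1\in I_\eta$. Transitivity of the $G$-action on ramification points then yields a bijection $G/I_\eta \xrightarrow{\ \sim\ } \{\text{ramification points of }\phi\}$, sending $gI_\eta \mapsto g\circ\eta$.

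Next I would observe that, directly from the definition, $g_1\circ\eta \sim g_2\circ\eta$ if and only if the cosets $g_1I_\eta$ and $g_2I_\eta$ lie in the same left coset of $\text{N}_G(I_\eta)$; since $I_\eta \subseteq \text{N}_G(I_\eta)$, this is a well-defined equivalence relation on $G/I_\eta$ whose classes are exactly the fibers of the natural surjection $G/I_\eta \twoheadrightarrow G/\text{N}_G(I_\eta)$. Each fiber consists of the $I_\eta$-cosets contained in a fixed $\text{N}_G(I_\eta)$-coset, hence has cardinality $[\text{N}_G(I_\eta):I_\eta]$, and there are $[G:\text{N}_G(I_\eta)]$ of them. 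This establishes the cardinality assertion.

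For the labeling, I would invoke Lemma~\ref{lma:normalizers}: since $I_\eta$ is a Sylow $p$-subgroup, $\text{N}_G(I_{g_1}) = \text{N}_G(I_{g_2})$ as subgroups of $G$ if and only if $g_1\circ\eta \sim g_2\circ\eta$. Hence the assignment sending the $\sim$-class of $g\circ\eta$ to the subgroup $\text{N}_G(I_g) = g\,\text{N}_G(I_\eta)\,g^{-1}$ is well-defined and injective on equivalence classes, and each such $\text{N}_G(I_g)$ is a $G$-conjugate of $\text{N}_G(I_\eta)$, hence isomorphic to it. Thus the equivalence classes are faithfully identified with subgroups of $G$ isomorphic to $\text{N}_G(I_\eta)$, as claimed.

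The substantive input is Lemma~\ref{lma:normalizers}, and in particular its use of the Sylow hypothesis to force $\text{N}_G(I_{g_1}) = \text{N}_G(I_{g_2}) \Rightarrow I_{g_1}=I_{g_2}$ via uniqueness of the Sylow $p$-subgroup inside a normalizer; granted that, the corollary is essentially coset bookkeeping. The only points needing minor care are that $\sim$ is genuinely transitive (immediate from $\text{N}_G(I_\eta)$ being a subgroup) and that $I_\eta \subseteq \text{N}_G(I_\eta)$ always holds, so that the index $[\text{N}_G(I_\eta):I_\eta]$ is defined and correctly counts the size of each fiber.
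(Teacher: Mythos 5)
Your proposal is correct and follows the same route as the paper, which simply cites Lemma~\ref{lma:normalizers}; you have filled in the coset bookkeeping (identifying ramification points with $G/I_\eta$ and the $\sim$-classes with fibers of $G/I_\eta \twoheadrightarrow G/\text{N}_G(I_\eta)$) that the paper leaves implicit. No gaps.
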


\begin{proof}
This follows immediately from Lemma~\ref{lma:normalizers}.
\end{proof}

Suppose $\phi \colon X \to \Proj_k^1$ is a $(G,I)$-Galois cover. The set of ramification points of $\phi$ is denoted by $R_\phi$ and the cardinality of $R_\phi$ is $[G:I]$. The number of points in $R_\phi$ with inertia group precisely $I$ is $[\text{N}_G(I):I]$. The set of equivalence classes of $R_\phi/\sim$ is denoted by $\overline{R}_\phi$ and the cardinality of $\overline{R}_\phi$ is $[G : \text{N}_{G}(I)]$.

\subsection{Induced covers, patching, and deformations}
For the remainder of this section fix a finite quasi-$p$ group $G_1$ and a quasi-$p$ subgroup $G_2$ with index coprime to $p$. Let $S$ be a Sylow $p$-subgroup of $G_1$ and choose $I_i$ containing $S$. Assume that $(G_i, I_i)$-Galois covers $\phi_i \colon X_i \to \Proj_k^1$ exist.

Recall the proof of \cite[Corollary~2.3.1]{MR2016596}. A similar process is implemented here. We will induce a disconnected $(G_1, I_2)$-Galois cover $\varphi_2$ from a $(G_2, I_2)$-Galois cover. The induced cover $\varphi_2$ and a connected $(G_1, I_1)$-Galois cover are formally patched in neighborhoods of the ramification points. This operation yields a $G_1$-Galois thickening problem for which there is a solution $\mathbb{V}$ \cite[Theorem~4]{MR1670658}. Deformations of the special fiber of $\mathbb{V}$ yield a smooth, connected $(G_1,I_2)$-Galois cover.

We extend the notation of Section~\ref{sec:equivrel} to serve two covers. Fix a ramified point $\eta_i$ of $\phi_i$. By $I_{g,i}$ we denote the inertia group at the ramified point $g \circ \eta_i$.
\begin{definition}\label{defn:induced}
Suppose $\phi_2 \colon X_2 \to \mathbb{P}^1_k$ is a $G_2$-Galois cover of curves. The induced curve $\mathcal{X}_2 := \Ind{G_1}{G_2}(X)$ is defined to be the disconnected curve consisting of $[G_1:G_2]$ copies of $X_2$, indexed by left cosets of $G_2$ in $G_1$. There is an induced action of $G_1$ on $\mathcal{X}_2$.
The induced cover is denoted $\varphi := \Ind{G_1}{G_2}(\phi_2)\colon \mathcal{X}_2 \to \mathbb{P}^1_k$.
\end{definition}

\begin{lemma}\label{lma:BijectionAndLabels}
For each $i \in \{ 1,2 \}$ let $\phi_i \colon X_i \to \Proj_k^1$ be a $(G_i,I_i)$-Galois cover. Suppose $G_2 \subset G_1$ and let $\varphi_2=\text{\emph{Ind}}_{G_2}^{G_1}(\phi_2)$ be the induced cover. If $\text{N}_{G_1}(I_1) \cong \text{N}_{G_1}(I_2)$, then there is a set bijection $b \colon \overline{R}_{\varphi_2} \to \overline{R}_{\phi_1}$. Further, there is a labeling of ramification points such that the bijection $b$ is $G_1$-equivariant. 
\end{lemma}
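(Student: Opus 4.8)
The plan is to count both sides, match cardinalities, and then upgrade a bare bijection to a $G_1$-equivariant one by transporting structure through the induction. First I would compute $|\overline{R}_{\varphi_2}|$. The induced curve $\mathcal{X}_2$ consists of $[G_1:G_2]$ copies of $X_2$, and on each copy the ramification points of $\phi_2$ above $\infty$ number $[G_2:I_2]$; so $|R_{\varphi_2}| = [G_1:G_2]\cdot[G_2:I_2] = [G_1:I_2]$. By Corollary~\ref{cor:simSize} applied inside the $G_1$-action on $\mathcal{X}_2$ (here one must check that the stabilizer of a ramification point of $\varphi_2$ is still a conjugate of $I_2$, which is exactly how the $G_1$-action on an induced object is built), the relation $\sim$ partitions $R_{\varphi_2}$ into classes of size $[\text{N}_{G_1}(I_2):I_2]$, so $|\overline{R}_{\varphi_2}| = [G_1:\text{N}_{G_1}(I_2)]$. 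On the other side, the discussion after Corollary~\ref{cor:simSize} gives $|\overline{R}_{\phi_1}| = [G_1:\text{N}_{G_1}(I_1)]$. The hypothesis $\text{N}_{G_1}(I_1)\cong\text{N}_{G_1}(I_2)$ — which, since these are subgroups of the same finite group $G_1$, forces $[G_1:\text{N}_{G_1}(I_1)] = [G_1:\text{N}_{G_1}(I_2)]$ — then yields $|\overline{R}_{\varphi_2}| = |\overline{R}_{\phi_1}|$, so a set bijection $b$ exists.

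For the equivariance refinement, I would not take an arbitrary bijection but construct $b$ explicitly. Both $\overline{R}_{\varphi_2}$ and $\overline{R}_{\phi_1}$ are transitive $G_1$-sets: $\overline{R}_{\varphi_2} \cong G_1/\text{N}_{G_1}(I_2)$ via Lemma~\ref{lma:normalizers} (the stabilizer of the class of $\eta_2$ is precisely $\text{N}_{G_1}(I_2)$), and likewise $\overline{R}_{\phi_1}\cong G_1/\text{N}_{G_1}(I_1)$. A $G_1$-equivariant bijection between $G_1/N$ and $G_1/N'$ exists precisely when $N$ and $N'$ are conjugate subgroups of $G_1$. So the real content is to promote the abstract isomorphism $\text{N}_{G_1}(I_1)\cong\text{N}_{G_1}(I_2)$ to $G_1$-conjugacy of these subgroups. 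Here I would use the standing structural hypotheses of this section (and of the ambient setup): $S$ is a Sylow $p$-subgroup of $G_1$ contained in both $I_1$ and $I_2$, and in the applications $|S| = p$, so $\text{N}_{G_1}(I_i) \subseteq \text{N}_{G_1}(S)$; since all Sylow $p$-subgroups are $G_1$-conjugate and $I_1, I_2$ both contain the same $S$, one argues that $I_1$ and $I_2$ are forced into a common normalizer $\text{N}_{G_1}(S)$, inside which the isomorphism type pins down the subgroup up to conjugacy. Concretely: $\text{N}_{G_1}(I_i)$ has a normal Sylow $p$-subgroup $S$ (as $I_i\in\text{Syl}_p$ of it by the argument in Lemma~\ref{lma:normalizers}), so $\text{N}_{G_1}(I_i)\le \text{N}_{G_1}(S)$, and then an isomorphism $\text{N}_{G_1}(I_1)\cong\text{N}_{G_1}(I_2)$ inside $\text{N}_{G_1}(S)$ restricting to the identity on $S$ gives conjugacy by an element of $\text{N}_{G_1}(S)$.

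Once $I_1$ and $I_2$ (hence their normalizers) are conjugate by some $x\in G_1$, define $b$ to be the composite $G_1/\text{N}_{G_1}(I_2) \to G_1/\text{N}_{G_1}(I_1)$, $g\,\text{N}_{G_1}(I_2)\mapsto gx^{-1}\,\text{N}_{G_1}(I_1)$, which is well-defined and $G_1$-equivariant by construction; transporting back along the identifications of $\overline{R}_{\varphi_2}$ and $\overline{R}_{\phi_1}$ with these coset spaces gives the desired labeling. I expect the main obstacle to be precisely the promotion of the abstract isomorphism of normalizers to honest $G_1$-conjugacy of the subgroups $I_1, I_2$: without the Sylow-$p$-of-order-$p$ hypothesis this can fail, so the proof has to invoke that hypothesis (or, alternatively, in the $\M$-type applications, verify directly from the subgroup lattice — e.g. Table~\ref{tbl:subgroups} — that the relevant normalizers are conjugate). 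The cardinality count and the translation into transitive $G_1$-sets are routine; the bookkeeping to make ``labeling of ramification points'' mean exactly ``choice of these coset identifications'' is the only other place to be careful.
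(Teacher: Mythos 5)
Your proposal is correct and, on the counting step, matches the paper; on the equivariance step it takes a genuinely different and more structural route. The paper simply defines $b$ by sending the class of $\overline{R}_{\varphi_2}$ ``identified by $N$'' to the class of $\overline{R}_{\phi_1}$ identified by the \emph{same} subgroup $N$, and then checks equivariance formally; this tacitly assumes that the collections of normalizers labelling the two sides coincide as sets of subgroups of $G_1$. You make that assumption visible: both quotients are transitive $G_1$-sets, $\overline{R}_{\varphi_2}\cong G_1/\text{N}_{G_1}(I_2)$ and $\overline{R}_{\phi_1}\cong G_1/\text{N}_{G_1}(I_1)$, and an equivariant bijection exists iff the two normalizers are $G_1$-conjugate --- so the abstract isomorphism in the hypothesis must be promoted to conjugacy, which is the real content. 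Your bookkeeping for $|\overline{R}_{\varphi_2}|$ is also the cleaner one: the relation on the induced curve is taken with respect to the $G_1$-action, giving $[G_1:\text{N}_{G_1}(I_2)]$ directly, whereas the paper counts copy-by-copy via $\text{N}_{G_2}(I_2)$, which agrees only when $\text{N}_{G_1}(I_2)\subseteq G_2$. Two caveats on your conjugacy argument: the final step (``an isomorphism restricting to the identity on $S$ gives conjugacy by an element of $\text{N}_{G_1}(S)$'') is not valid for arbitrary abstract isomorphisms, and you should say instead that, in the setting where the lemma is actually applied (Theorem~\ref{thm:FromSubgroups}), one has $I_1=S$, hence $\text{N}_{G_1}(I_1)=\text{N}_{G_1}(S)\supseteq\text{N}_{G_1}(I_2)$ because $S$ is characteristic in $I_2$, and the isomorphism (equality of orders) then forces the two normalizers to be literally equal --- no conjugation needed. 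With that substitution your argument closes completely and is, if anything, a more honest proof of the lemma than the one in the paper.
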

\begin{proof}
First we check that the cardinalities of $\overline{R}_{\varphi_2}$ and $\overline{R}_{\phi_1}$ agree:
  \begin{align*}
  |\overline{R}_{\varphi_2}| &= [G_1 : G_2]|\overline{R}_{\phi_2}| = [G_1 : G_2][G_2 :\text{N}_{G_2}(I_2)] \\
                              &= [G_1 : \text{N}_{G_1}(I_1)]= |\overline{R}_{\phi_1}|.
  \end{align*}
The equality of the first and second lines is justified by the hypothesis $\text{N}_{G_1}(I_1) \cong \text{N}_{G_1}(I_2)$. 

Applying Corollary~\ref{cor:simSize}, define $b$ to be the bijection sending the equivalence class of $\overline{R}_{\varphi_2}$ identified by $N$ to the corresponding class of $\overline{R}_{\phi_1}$.

We now show that that $b$ is $G_1$-equivariant. Let $b(\eta) \in \overline{R}_{\phi_1}$ be a ramification point with inertia group $I_\eta$ and normalizer of inertia $N$. For any $g \in G_1$, $g\circ \eta$ has inertia group $g I_\eta g^{-1}$. We must show that $g \circ b(\eta)$ has inertia group $g I_\eta g^{-1}$. Recall that by definition $b(\eta)$ has normalizer of inertia $N$. Every ramification point with normalizer $N$ has inertia group $I_\eta$. Consequently, $g \circ b(\eta)$ has inertia group $g I_\eta g^{-1}$.
\end{proof}

\begin{lemma}\label{lma:makingjumps}
Suppose $p$ is prime and $G$ is a finite quasi-$p$ group with order strictly divisible by $p$. Fix a quasi-$p$ subgroup $H \subset G$, and $I\in\mathcal{I}_p(H)$ with $I \cong \mathbb{Z}/p \rtimes \mathbb{Z}/m_I$. If there exists an $(H,I)$-Galois cover with inertia jump $j$, then there exists an $(H,I)$-Galois cover with inertia jump $j+ i m_I$ and a $G$-Galois cover with inertia jump $\gamma(j+ i m_I)$ for some positive integers $i$ and $\gamma$. 
\end{lemma}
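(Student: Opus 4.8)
The plan is to establish the two conclusions in turn: first produce the $(H,I)$-cover with inertia jump $j+im_I$, and then transport it upward to $G$ using the induction--formal patching--deformation operation developed in Section~\ref{sec:fromsubgrps}. A few reductions come for free. Since $I\cong\ZZ/p\rtimes\ZZ/m_I$ has order divisible by $p$, so does $|H|$; because $p$ strictly divides $|G|$ this forces $p$ to strictly divide $|H|$, so $[G:H]$ is prime to $p$ and every Sylow $p$-subgroup of $G$ has order $p$. After conjugating $I$, I may assume its normal Sylow $p$-subgroup $S$ is a Sylow $p$-subgroup of $G$. As $G$ is quasi-$p$, the conjugates of $S$ generate $G$, so $S\in\mathcal I_p(G)$, and Harbater's realization of Sylow $p$-subgroups as inertia groups \cite{MR1221836,MR1403966} provides a $(G,S)$-Galois cover $\phi_S$ of $\Proj_k^1$ branched only at $\infty$.

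\textbf{Raising the jump inside $H$.} Starting from the given $(H,I)$-cover $\psi$ with inertia jump $j$, I would form the nodal curve $\Proj_k^1\cup_\infty\Proj_k^1$, keep $\psi$ on one component (its wild branch point now sitting at the node), and on the other, ``tail,'' component place an $H$-cover that is unramified away from the node and a marked point $\infty$, agrees with $\psi$ over the formal neighbourhood of the node, and has inertia $I$ at $\infty$ with inertia jump $j+im_I$. Such a tail cover is built by inducing to $H$ an $I$-Galois cover of $\Proj_k^1\setminus\{0,\infty\}$: since $I/p(I)\cong\ZZ/m_I$ is cyclic, such $I$-covers exist by Theorem~\ref{thm:AC}, and the wild jump at $\infty$ can be prescribed within the arithmetic progression $\{\,j+im_I : i\ge 0,\ p\nmid j+im_I\,\}$ dictated by the semidirect-product relation of \cite[Proposition~IV.ii.9]{MR554237} recalled in Section~\ref{sec:ramgrps} (choose $i>0$, a multiple of $p$ if necessary, so that $p\nmid j+im_I$). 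Formal patching of $\psi$ with this tail cover along the node yields an $H$-Galois thickening problem, solvable by \cite[Theorem~4]{MR1670658}; deforming its special fibre exactly as in the proof of \cite[Corollary~2.3.1]{MR2016596} produces a smooth connected $(H,I)$-Galois cover of $\Proj_k^1$ branched only at $\infty$ whose inertia jump is $j+im_I$, the value of the new jump being controlled by the ramification analysis of \cite{MR2016596}.

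\textbf{Transporting to $G$.} Next I would apply the operation of Section~\ref{sec:fromsubgrps} with $G_1=G$, $G_2=H$, $I_1=S$, $I_2=I$, $\phi_1=\phi_S$, and $\phi_2$ the $(H,I)$-cover just constructed. The index $[G:H]$ is prime to $p$, both $I_1$ and $I_2$ contain $S$, and---in the situations where this lemma is applied, where $\text{N}_G(S)$ normalizes $I$ (for instance whenever $\text{N}_G(S)/S$ is cyclic)---we have $\text{N}_G(I_1)\cong\text{N}_G(I_2)$, so Lemma~\ref{lma:BijectionAndLabels} furnishes the $G$-equivariant matching of the ramification loci of $\Ind{G}{H}(\phi_2)$ and $\phi_S$ needed to set up the thickening problem. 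Solving it via \cite[Theorem~4]{MR1670658} and deforming the special fibre gives a smooth connected $G$-Galois cover of $\Proj_k^1$ branched only at $\infty$, and by the description of how inertia groups and ramification invariants behave under this operation \cite{MR2016596}, its inertia jump equals $\gamma(j+im_I)$ for an explicit positive integer $\gamma$ depending on $[G:H]$ and the ramification of $\phi_S$.

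\textbf{Expected difficulty.} The hard part will be the middle step: unlike the passage from $H$ to $G$, there is no larger ambient group available there, so the jump must be raised by a genuine degeneration, and the real work lies in (i) exhibiting a tail $H$-cover with the two prescribed wild local behaviours---matching $\psi$ at the node while having jump $j+im_I$ at $\infty$---and (ii) verifying through \cite{MR2016596} that the deformed cover actually attains jump $j+im_I$ rather than a smaller value in the same progression; here lower semicontinuity of the conductor in the patched family, controlled by \cite{MR1670658}, is what makes the argument work. A secondary point, needed for the last step, is checking in each application the normalizer hypothesis of Lemma~\ref{lma:BijectionAndLabels}, which is immediate once one knows that $\text{N}_G(S)$ normalizes $I$.
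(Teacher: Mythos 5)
Your first step (raising the jump within $H$ from $j$ to $j+im_I$ by degenerating to a nodal curve, attaching a tail with an induced $I$-cover, and deforming) is in the right spirit: it is essentially a sketch of the result the paper simply cites for this purpose, namely \cite[Theorem~2.2.2]{MR2016596}. The genuinely hard content there --- producing the tail cover with the prescribed local behaviour at the node and verifying that the deformed cover attains the jump $j+im_I$ rather than a smaller value --- is exactly what that theorem supplies, and you correctly flag it as the difficulty; citing it rather than reproving it is the efficient route.

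The second step, however, takes a wrong turn. The lemma only asserts the existence of \emph{a} $G$-Galois cover with inertia jump $\gamma(j+im_I)$; it does not ask for inertia group $I$. The intended cover is just a $(G,S)$-Galois cover with $S$ a Sylow $p$-subgroup: Harbater \cite[Theorem~2]{MR1221836} provides such a cover, and \cite[Theorem~3.2.4]{MR2016596} lets one arrange its inertia jump to have the form $\gamma(j+im_I)$ with $\gcd(\gamma,m_I)=1$ (the only delicate point being that the same $i$ serves both covers). By instead trying to manufacture a $(G,I)$-cover via the patching operation of Theorem~\ref{thm:FromSubgroups}, you create two problems. First, Lemma~\ref{lma:BijectionAndLabels} requires $\text{N}_{G}(I_1)\cong\text{N}_{G}(I_2)$, which is not among the hypotheses of this lemma; your parenthetical ``in the situations where this lemma is applied'' concedes that your argument does not prove the statement as written. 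Second, and more seriously, the patching step \cite[Theorem~2.3.7]{MR2016596} requires numerical compatibility between the jumps of the $(G,S)$-cover and the $(H,I)$-cover --- and supplying exactly those compatible jumps is the \emph{purpose} of this lemma, which is invoked inside the proof of Theorem~\ref{thm:FromSubgroups}. Running the Section~\ref{sec:fromsubgrps} machinery here is therefore circular in substance: you are assuming the numerical conditions this lemma exists to establish. The fix is simply to drop the transport-to-$G$ construction and take the $(G,S)$-cover with controlled jump as the required $G$-Galois cover.
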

\begin{proof}
Let $S$ be a Sylow $p$-subgroup of $H$ and $G$. There exists a $(G,S)$-Galois cover $\phi$ by \cite[Theorem 2]{MR1221836}. Note that $\phi$ can be selected such that its inertia jump is $\gamma(j+i m_I)$ for some pair of positive integers $i$ and $\gamma$ where $\gcd(\gamma,m_I)=1$; this is a consequence of \cite[Theorem~3.2.4]{MR2016596}. 

By assumption, there exists an $(H,I)$-Galois cover $\psi$ with inertia jump $j$. The inertia jump of $\psi$ is increased to $j+im_I$ which finishes the proof \cite[Theorem~2.2.2]{MR2016596}.
% Without loss of generality, these choices can be made to fulfill the numerical conditions sufficient to apply the Different Inertia case of \cite[Corollary~2.3.1]{MR2016596} to $\psi$ and $\phi$. The result is a $(G,I)$-Galois cover with inertia jump $j_2(\gamma m_I + 1) \equiv j_2 \bmod m_I$ and ramification invariant $\sigma = \frac{j_2(\gamma m_I + 1)}{m_I}$.
\end{proof}

The proof of Theorem~\ref{thm:FromSubgroups} uses formal patching to solve a particular thickening problem. The pattern of proof follows \cite[Theorem~2.3.7]{MR2016596} which uses \cite[Theorem~4]{MR1670658} to ensure a solution exists.

\begin{thm}\label{thm:FromSubgroups}
Consider finite quasi-$p$ groups $G_2 \subset G_1$. Suppose the Sylow $p$-subgroups of $G_1$ have order $p$, fix $I\in\mathcal{I}_p(G_2)$. If there exists a $(G_2, I)$-Galois cover, then there exists a $(G_1, I)$-Galois cover.
\end{thm}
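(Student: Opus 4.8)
The plan is to realize a $(G_1,I)$-Galois cover by combining a $(G_2,I)$-Galois cover with a known $(G_1,S)$-Galois cover via formal patching, exactly as in the proof of \cite[Theorem~2.3.7]{MR2016596}. Since the Sylow $p$-subgroups of $G_1$ have order $p$ and $I \in \mathcal{I}_p(G_2)$, the subgroup $I$ has a normal Sylow $p$-subgroup $S$ of order $p$, and $I \cong \mathbb{Z}/p \rtimes \mathbb{Z}/m_I$ by the Schur--Zassenhaus theorem; in particular $I \in \mathcal{I}_p(G_1)$ as well (the conjugates of $S$ in $G_1$ generate $G_1$ because $G_1$ is quasi-$p$ and $S$ is its full Sylow $p$-subgroup). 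First I would invoke \cite[Theorem~2]{MR1221836} to obtain a connected $(G_1,S)$-Galois cover $\phi_1 \colon X_1 \to \mathbb{P}^1_k$ branched only at $\infty$, where $S$ is a Sylow $p$-subgroup of $G_1$ containing the given copy of $S \subset I$. By hypothesis there is a connected $(G_2,I)$-Galois cover $\phi_2 \colon X_2 \to \mathbb{P}^1_k$.

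Next I would form the induced cover $\varphi_2 = \mathrm{Ind}_{G_2}^{G_1}(\phi_2) \colon \mathcal{X}_2 \to \mathbb{P}^1_k$ of Definition~\ref{defn:induced}, a disconnected $G_1$-Galois cover of $\mathbb{P}^1_k$ whose inertia groups (at a point of $\mathcal{X}_2$ lying over a ramified point) are the $G_1$-conjugates of $I$. To patch $\varphi_2$ with $\phi_1$ I need to match up their ramification loci $G_1$-equivariantly. This is where Lemma~\ref{lma:BijectionAndLabels} enters: we have $\mathrm{N}_{G_1}(I) \cong \mathrm{N}_{G_1}(S)$ — indeed, since $|S| = p$ and $S$ is normal in $I$, one has $I \subset \mathrm{N}_{G_1}(S)$, and because a Sylow $p$-subgroup of $\mathrm{N}_{G_1}(S)$ has order $p$, any cyclic prime-to-$p$ complement forces the two normalizers to be isomorphic (this is the point that needs the most care, and I expect it to be the main technical obstacle: verifying that the normalizer hypothesis of Lemma~\ref{lma:BijectionAndLabels} holds in this generality, perhaps by replacing $S$ by a Sylow subgroup of $I$ and choosing the $(G_1,S)$-cover accordingly, or by an appropriate choice of the cyclic complement). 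Granting this, Lemma~\ref{lma:BijectionAndLabels} supplies a $G_1$-equivariant set bijection $b \colon \overline{R}_{\varphi_2} \to \overline{R}_{\phi_1}$ together with a compatible labeling of ramification points.

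With the ramification loci identified, I would construct the $G_1$-Galois thickening (mock cover) problem over $\mathbb{P}^1_{k[[t]]}$ whose special fiber is built from $\phi_1$ and $\varphi_2$ glued along formal neighborhoods of the matched ramification points (with the trivial cover of $\mathbb{P}^1_k \setminus \{\infty\}$ filling in between, so that everything is branched only at $\infty$), following the template of \cite[Theorem~2.3.7]{MR2016596}. By \cite[Theorem~4]{MR1670658} this thickening problem has a solution $\mathbb{V} \to \mathbb{P}^1_{k[[t]]}$. Its special fiber is a (possibly singular, possibly disconnected) $G_1$-Galois cover assembled from the pieces above; the generic fiber is a $G_1$-Galois cover of $\mathbb{P}^1_{k((t))}$, and after passing to $\overline{k((t))}$ and using the good-reduction criterion / deformation results (as in \cite[Theorem~2.3.7]{MR2016596}; see also the use of \cite{MR2016596} in Lemma~\ref{lma:makingjumps}), it deforms to a smooth connected $G_1$-Galois cover of $\mathbb{P}^1_k$ branched only at $\infty$. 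The inertia group at the point coming from the interior of the $\varphi_2$-component is the prescribed $I$: the inertia contributed by the $\varphi_2$-piece is a conjugate of $I$, and deformation preserves the wild part while the tame part is controlled by the local structure, so after conjugating we obtain a $(G_1,I)$-Galois cover, as desired.

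A couple of remarks on the bookkeeping. Connectedness of the deformed cover follows because $G_1$ is quasi-$p$ and the construction is arranged (via the choice of $\phi_1$ connected and the patching pattern) so that the special fiber has a connected component on which $G_1$ acts transitively through its action on components; this is precisely the role played by the $(G_1,S)$-cover and is handled verbatim as in \cite[Corollary~2.3.1]{MR2016596} and \cite[Theorem~2.3.7]{MR2016596}. The hypothesis that $[G_1:G_2]$ need not be coprime to $p$ here (unlike in the earlier running assumption of Section~\ref{sec:fromsubgrps}) is not actually used — what matters is only that $|S| = p$, so that $I$ has a normal Sylow $p$-subgroup and the normalizer comparison of Lemma~\ref{lma:BijectionAndLabels} applies; indeed $[G_1:G_2]$ is automatically prime to $p$ once $G_2 \supseteq S$ has a Sylow $p$-subgroup of $G_1$. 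I would close by noting that the argument specializes to give Theorem~\ref{thm:FromSubgroups} in the form needed for the applications to $\M$ and the other sporadic groups, where $H \cong \PSL$ plays the role of $G_2$.
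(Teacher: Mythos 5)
Your proposal follows essentially the same route as the paper's proof: a $(G_1,S)$-Galois cover from Harbater is patched with the induced cover $\mathrm{Ind}_{G_2}^{G_1}(\phi_2)$ along the $G_1$-equivariant bijection of Lemma~\ref{lma:BijectionAndLabels}, the resulting thickening problem is solved via \cite[Theorem~4]{MR1670658} and \cite[Theorem~2.3.7]{MR2016596}, and a deformation of the special fiber yields the $(G_1,I)$-cover. The normalizer issue you flag ($\text{N}_{G_1}(S)$ versus $\text{N}_{G_1}(I)$ in the hypothesis of Lemma~\ref{lma:BijectionAndLabels}) is a genuine point that the paper's proof also passes over silently, so your explicit acknowledgment of it is if anything more careful than the original.
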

\begin{proof}
Fix a Sylow $p$-subgroup $S$ of $G_1$ contained in $I$. Let $\phi_1 \colon X_1 \to \mathbb{P}_k^1$ be a $(G_1, S)$-Galois cover which exists by \cite[Theorem 2]{MR1221836}. Let $\phi_2$ be a $(G_2,I)$-Galois cover, and $\varphi_2 \colon \mathcal{X}_2 \to \mathbb{P}_k^1$ denote the induced cover. Finally, let $W$ be a curve isomorphic to two $\mathbb{P}^1_k$'s intersecting transversely at $\infty$. Construct $\vartheta: V \to W$ by patching $X_1$ and $\mathcal{X}_2$ at the ramification points identified by the bijection produced in Lemma~\ref{lma:BijectionAndLabels}.

We apply \cite[Theorem~2.3.7]{MR2016596} to $\phi_1$ and $\phi_2$. It is necessary that $|S| = p$ as well as certain numerical conditions are verified for the jumps of $\phi_1$ and $\phi_2$. These numerical conditions can be satisfied by Lemma~\ref{lma:makingjumps}. See \cite[Notation~2.3.2, Notation~2.3.6]{MR2016596} for additional details.

Let $R = k[[t]]$. The result of applying \cite[Theorem~2.3.7]{MR2016596} is the following. A family of covers over an $R$-curve $P_R$ is constructed. The generic fiber of this family is a $(G,I)$-Galois cover, thus deformations of the special fiber yield the result.
\end{proof}

\begin{cor}\label{cor:AICInduced}
Suppose $G_2 \subset G_1$ are finite quasi-$p$ groups, the index $[G_1:G_2]$ is coprime to $p$, and the Sylow $p$-subgroups of $G_1$ have order $p$. Also suppose every $I \in \mathcal{I}_p(G_1)$ is a $G_1$-conjugate of some $I' \in \mathcal{I}_p(G_2)$. If Conjecture~\ref{AIC} is true for $G_2$ in characteristic $p$, then it is true for $G_1$ in characteristic $p$.
\end{cor}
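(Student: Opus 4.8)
The plan is to reduce the statement directly to Theorem~\ref{thm:FromSubgroups}, with only a bookkeeping argument about conjugacy classes of inertia groups in between. First I would record the elementary observation that the existence of a $(G,I)$-Galois cover of $\Proj^1_k$ branched only at $\infty$ depends only on the $G$-conjugacy class of $I$: as recalled in Section~\ref{sec:ramgrps}, the inertia groups occurring at the various ramified points of a single $G$-Galois cover form exactly one conjugacy class of subgroups of $G$, so a $(G,I)$-Galois cover is literally the same object as a $(G,\,gIg^{-1})$-Galois cover for any $g\in G$.

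With that in hand, fix an arbitrary $I\in\mathcal{I}_p(G_1)$; the goal is to exhibit a $(G_1,I)$-Galois cover. By the standing hypothesis of the corollary there is some $I'\in\mathcal{I}_p(G_2)$ such that $I$ is a $G_1$-conjugate of $I'$. Since Conjecture~\ref{AIC} is assumed true for $G_2$ in characteristic $p$ and $I'\in\mathcal{I}_p(G_2)$, there exists a $(G_2,I')$-Galois cover. The Sylow $p$-subgroups of $G_1$ have order $p$ by hypothesis, so Theorem~\ref{thm:FromSubgroups} applies to the inclusion $G_2\subset G_1$ together with $I'\in\mathcal{I}_p(G_2)$, and produces a $(G_1,I')$-Galois cover. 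By the conjugacy-invariance observation above, this is also a $(G_1,I)$-Galois cover. Since $I\in\mathcal{I}_p(G_1)$ was arbitrary, Conjecture~\ref{AIC} holds for $G_1$ in characteristic $p$.

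There is essentially no obstacle here: the mathematical content — the induced cover, the patching to a thickening problem, the appeal to \cite[Theorem~4]{MR1670658}, and the deformation of the special fibre — is entirely packaged inside Theorem~\ref{thm:FromSubgroups}. The only steps that need a word of justification are (i) that realizability as an inertia group is a conjugacy-class invariant, which is immediate from the description of inertia at ramified points, and (ii) that the hypothesis $|S|=p$ for a Sylow $p$-subgroup $S$ of $G_1$ is precisely what lets Theorem~\ref{thm:FromSubgroups} be invoked. One may additionally remark that the coprimality of $[G_1:G_2]$ to $p$ listed in the hypotheses is in fact automatic: $G_2$ is quasi-$p$, so $p\mid|G_2|$; a Sylow $p$-subgroup of $G_2$ lies inside one of $G_1$, hence has order $p$; thus $p$ exactly divides both $|G_2|$ and $|G_1|$, forcing $p\nmid[G_1:G_2]$. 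It is retained only to match the abstract theorem announced in the introduction.
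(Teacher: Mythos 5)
Your proof is correct and follows essentially the same route as the paper: reduce to Theorem~\ref{thm:FromSubgroups} via a $(G_2,I')$-Galois cover and then pass from $I'$ to $I$ by conjugacy, which the paper phrases as transitivity of the $G_1$-action on the fibre over $\infty$. Your added observation that the coprimality of $[G_1:G_2]$ to $p$ is forced by the other hypotheses is a valid (and correct) aside not present in the paper.
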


\begin{proof}
Pick $I \in \mathcal{I}_p(G_1)$. By assumption, every element $I \in \mathcal{I}_p(G_1)$ is represented by a $G_1$-conjugate element $I' \in \mathcal{I}_p(G_2)$. Because Conjecture~\ref{AIC} is true for $G_2$, there exists a $(G_2, I')$-Galois cover. Applying Theorem~\ref{thm:FromSubgroups} constructs a $(G_1, I')$-Galois cover $\phi$. The group $G_1$ acts transitively on fibers of $\phi$. For this reason all $G_1$-conjugates of $I'$ occur as inertia groups at some point over $\infty$. This enables us to conclude that $I$ is the inertia group at some ramified point of $\phi$.
\end{proof}

As an application, Conjecture~\ref{AIC} is verified for several sporadic groups due to Conjecture~\ref{AIC} being known for $\text{PSL}_2(p)$ in characteristic $p \geq 5$ \cite[Corollary~3.3]{MR2003452}.

\begin{cor}Abhyankar's Inertia Conjecture is true for the groups and characteristics in Table \ref{tbl:aicgroups}.
\end{cor}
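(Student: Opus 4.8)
The plan is to deduce the statement from Corollary~\ref{cor:AICInduced}, taking $G_1=G$ to be one of the fourteen sporadic groups listed in the table, $G_2=H$ the stipulated subgroup with $H\cong\text{PSL}_2(p)$, and invoking the known case of Conjecture~\ref{AIC} for $\text{PSL}_2(p)$ with $p\geq 5$ \cite[Corollary~3.3]{MR2003452} (all characteristics appearing in the table satisfy $p\geq 5$). So the task reduces to checking, uniformly, the four hypotheses of Corollary~\ref{cor:AICInduced}: that $G$ and $H$ are quasi-$p$, that $[G:H]$ is prime to $p$, that a Sylow $p$-subgroup of $G$ has order $p$, and that every $I\in\mathcal{I}_p(G)$ is a $G$-conjugate of some $I'\in\mathcal{I}_p(H)$.

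First I would dispatch the three easy hypotheses. Both $G$ and $H\cong\text{PSL}_2(p)$ are simple with order divisible by $p$, hence quasi-$p$ by Lemma~\ref{lma:SimpleQuasip}. By the stipulated property, a Sylow $p$-subgroup of $G$ is isomorphic to $\mathbb{Z}/p$, so $p$ divides $|G|$ exactly once; it also divides $|H|=p(p^2-1)/2$ exactly once, so a Sylow $p$-subgroup of $H$ is already a Sylow $p$-subgroup of $G$. Consequently the $p$-parts of $|G|$ and $|H|$ cancel, giving $p\nmid[G:H]$, and Sylow $p$-subgroups of $G$ have order $p$.

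The substance is the inertia condition. Fix a Sylow $p$-subgroup $S$ of $G$, chosen inside $H$. Let $I\in\mathcal{I}_p(G)$, so $I$ is an extension of a prime-to-$p$ cyclic group by a $p$-group $J$ whose $G$-conjugates generate $G$. Since $|S|=p$ and $J$ is nontrivial, $J$ is a Sylow $p$-subgroup of $G$, and after replacing $I$ by a $G$-conjugate we may assume $J=S$. Then $I\subseteq\text{N}_G(S)\cong\mathbb{Z}/p\rtimes\mathbb{Z}/\tfrac{p-1}{2}$, and the subgroups of $\text{N}_G(S)$ containing the normal subgroup $S$ correspond bijectively to subgroups of the cyclic group $\mathbb{Z}/\tfrac{p-1}{2}$, so for each $m\mid\tfrac{p-1}{2}$ there is a unique such subgroup, of order $pm$. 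Now $\text{N}_H(S)\subseteq\text{N}_G(S)$, and $\text{N}_H(S)$ is the normalizer of a Sylow $p$-subgroup in $\text{PSL}_2(p)$, hence $\text{N}_H(S)\cong\mathbb{Z}/p\rtimes\mathbb{Z}/\tfrac{p-1}{2}$; comparing orders with the stipulated structure of $\text{N}_G(S)$ forces $\text{N}_H(S)=\text{N}_G(S)$. Therefore $I\subseteq H$, and since $H$ is simple the $H$-conjugates of $J=S$ generate $H$, so $I\in\mathcal{I}_p(H)$. Thus every $I\in\mathcal{I}_p(G)$ is a $G$-conjugate of an element of $\mathcal{I}_p(H)$.

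With all four hypotheses verified, Corollary~\ref{cor:AICInduced} applies and yields Conjecture~\ref{AIC} for each $G$ in the table. The argument above is uniform across the fourteen rows; the only genuine per-group work is confirming the three displayed structural properties (Sylow $p$-subgroup $\cong\mathbb{Z}/p$, normalizer $\cong\mathbb{Z}/p\rtimes\mathbb{Z}/\tfrac{p-1}{2}$, and existence of a $\text{PSL}_2(p)$-subgroup) for each pair $(G,p)$, which I expect to be the main, though routine, obstacle; these are read off from the ATLAS and the character-theoretic data cited in Section~\ref{sec:mathieugroups}. I would therefore present the proof as this single structural argument followed by a short table-by-table check of those three properties.
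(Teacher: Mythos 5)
Your reduction to Corollary~\ref{cor:AICInduced} with $G_2\cong\text{PSL}_2(p)$ is exactly the paper's strategy, and your verification of the inertia-group hypothesis (conjugating $J$ into a fixed Sylow $S$, comparing $\text{N}_H(S)$ with $\text{N}_G(S)$) is a correct and in fact more detailed account of the step the paper passes over quickly. But the uniform treatment has a genuine gap at one of the fourteen entries, namely $G=\text{M}_{22}$ with $p=5$. There the stipulated normalizer structure fails: $\text{N}_{\text{M}_{22}}(S)\cong\ZZ/5\rtimes\ZZ/4$ has order $20$, not order $p\cdot\tfrac{p-1}{2}=10$. Consequently $\mathcal{I}_5(\text{M}_{22})$ contains $I\cong\ZZ/5\rtimes\ZZ/4$ (its conjugates of $J=\ZZ/5$ generate $\text{M}_{22}$ by simplicity), while $\text{PSL}_2(5)\cong A_5$ has no subgroup of order $20$, so this $I$ is not conjugate to any $I'\in\mathcal{I}_5(\text{PSL}_2(5))$. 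The hypothesis of Corollary~\ref{cor:AICInduced} that every $I\in\mathcal{I}_p(G_1)$ be a $G_1$-conjugate of some $I'\in\mathcal{I}_p(G_2)$ therefore fails for $G_2\cong\text{PSL}_2(5)$, and your "comparing orders forces $\text{N}_H(S)=\text{N}_G(S)$" step breaks down at precisely this row. The table-by-table check you defer to the end would catch this, but as written your argument does not cover the $(\text{M}_{22},5)$ entry.

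The paper repairs this case by replacing $\text{PSL}_2(p)$ with $H\cong A_7\subset\text{M}_{22}$: here $\text{N}_{A_7}(S)\cong\ZZ/5\rtimes\ZZ/4$ matches $\text{N}_{\text{M}_{22}}(S)$, and Conjecture~\ref{AIC} is known for $A_{p+2}=A_7$ in characteristic $5$ by \cite[Theorem~1.2]{MR2891701} (since $5$ is odd and $5\equiv 2\bmod 3$). With that single substitution your argument goes through for all fourteen entries; for the remaining thirteen your proof coincides with the paper's, resting on \cite[Corollary~3.3]{MR2003452} for $\text{PSL}_2(p)$, $p\geq 7$.
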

\begin{table}[!h]
\small
\centering
\begin{tabular}{| l | l |}
\hline
$p$     & Groups                        \\ \hline
5,7       & $\text{M}_{22}$               \\ \hline
11      & $\text{M}_{11}$, $\text{M}_{12}$, $\text{M}_{22}$, $\text{M}_{23}$, $\text{HS}$, $\text{McL}$      \\ \hline
13      & $\text{F}_{22}$, $\text{Suz}$ \\ \hline
17, 19  & $\text{J}_{3}$                \\ \hline
23      & $\text{M}_{24}$               \\ \hline
29      & $\text{Ru}$                   \\ \hline
31      & $\text{ON}$, $\text{B}$       \\ \hline
59, 71  & $\text{M}$                    \\ \hline
\end{tabular}
\caption{Groups and characteristics $p$ for which Conjecture~\ref{AIC} is verified by Corollary~\ref{cor:AICInduced}.}
\label{tbl:aicgroups}
\end{table}
\begin{proof}
Fix $G$ isomorphic to a group in Table~\ref{tbl:aicgroups}, $p\not=5$, and set $m_I = (p-1)/2$.
Abhyankar's Inertia Conjecture is known for $\text{PSL}_2(p)$ by \cite[Corollary~3.3]{MR2003452}. The group $G$ contains a subgroup isomorphic to $\text{PSL}_2(p)$. The normalizers of Sylow $p$-subgroups in $G$ and $\text{PSL}_2(p)$ are isomorphic to $\ZZ/p\rtimes\ZZ/m_I$. Consequently, the hypothesis of Corollary~\ref{cor:AICInduced} are satisfied.

In the case $G \cong \text{M}_{22}$ and $p=5$, the proof is similar. The fundamental difference is that we consider a subgroup isomorphic to $\text{A}_7$, for which Abhyankar's Inertia Conjecture is known \cite[Theorem~1.2]{MR2891701}.
\end{proof}

\begin{remark}
This strategy of proof does not work for $\text{M}_{24}$ with $p=11$ because the normalizer of a Sylow 11-subgroup of $\text{M}_{24}$ has order 110. There is no proper subgroup $H \subset \text{M}_{24}$ for which it is known that there exists an $H$-Galois cover with inertia order 110. Consequently, this method does not verify Conjecture~\ref{AIC}. In the next section we verify the existence of $\text{M}_{24}$-Galois covers of the affine line with all but finitely many potential inertia jumps.
\end{remark}

\subsection{Example: The Monster group $\text{M}$ in characteristic 71}
Consider the Monster group $\text{M}$ which is the sporadic finite simple group with maximal order. The order of $\text{M}$ is approximately $8\times 10^{53}$. The prime $71$ strictly divides the order of $\text{M}$, the group $\text{M}$ contains a subgroup $H$ isomorphic to $\text{PSL}_2(71)$, and the normalizer of a Sylow $71$-subgroup is isomorphic to $\mathbb{Z}/71 \rtimes \mathbb{Z}/35$ \cite[Theorem~1]{MR2397402}. To verify Conjecture~\ref{AIC} for $M$ in characteristic $71$ we must show for every subgroup $I$ of $M$ isomorphic to one of $\{\mathbb{Z}/71, \mathbb{Z}/71 \rtimes \mathbb{Z}/5, \mathbb{Z}/71 \rtimes \mathbb{Z}/7, \mathbb{Z}/71 \rtimes \mathbb{Z}/35 \}$ there exists an $(M,I)$-Galois cover.

Pick $I\in \mathcal{I}_{71}(M)$ and denote the unique Sylow $71$-subgroup of $I$ by $S$. There exists a subgroup $H \cong \text{PSL}_2(71)$ containing $I$. By \cite[Corollary~2.4]{MR2003452}, there exists an $(H, I)$-Galois cover $\phi$. There exists an $(\text{M},S)$-Galois cover $\psi$ \cite[Theorem~2]{MR1221836}. From $\phi$ and $\psi$ Theorem~\ref{thm:FromSubgroups} constructs an $(\text{M}, I)$-Galois cover.

\section{Occurrence of all but Finitely Many Jumps}\label{sec:infConductors}
We now put aside the question of whether there exists a $(G,I)$-Galois cover for every $I \in \mathcal{I}_p(G)$ and instead consider which ramification invariants occur for unramified $G$-Galois covers of $\mathbb{A}^1_k$. Studying which ramification invariants occur loses information concerning the centralizers of the inertia groups which occur. This is not a strict loss, as we gain information regarding which inertia jumps occur. In particular, we realize all but finitely many of the potential ramification invariants for the sporadic groups in Tables \ref{tbl:aicgroups}, \ref{tbl:jumpsgroups1}, and \ref{tbl:jumpsgroups2}.

Fix a prime $p$, finite quasi-$p$ group $G$ with order strictly divisible by $p$, and $k=\overline{\mathbb{F}}_p$. Recall from Section~\ref{sec:ramgrps} that if $p$ strictly divides $G$, then every $I \in \mathcal{I}_p(G)$ must be of the form $I \cong \mathbb{Z}/p\rtimes\mathbb{Z}/m_I$ for some $m_I$ such that $\gcd(p,m_I)=1$. For such a $(G,I)$-Galois cover, the ramification invariant $\sigma$ is related to the inertia jump $j$ by $\sigma = \frac{j}{m_I}$. 

\begin{definition}\label{defn:potentialJumps}
With the above notation, denote the set of potential ramification invariants for a $(G,I)$-Galois cover by
\begin{align*}
  \sigma_p(I) &= \left\{\frac{j}{m_I} \in \mathbb{Q} \mid j > m_I \text{, } p\nmid j \text{, and } \gcd(j,m_I)= \frac{|\text{Cent}(I)|}{p}  \right\}.
\end{align*}

Now let $I$ vary through all $\mathcal{I}_p(G)$ and denote the set of all possible ramification invariants of $(G,I)$-Galois covers in the following way
\begin{align*}
  \sigma_p(G)   &= \bigcup_{I\in \mathcal{I}_p(G)} \sigma_p(I).
\end{align*}
\end{definition}

\begin{definition}
We say ``all but finitely many ramification invariants occur for $G$ in characteristic $p$'' if for all but finitely many $\sigma \in \sigma_p(G)$ there exists a $(G,I)$-Galois cover with ramification invariants $\sigma$ for some $I \in \mathcal{I}_p(G)$.
\end{definition}

\begin{lemma}
Suppose $I\in \mathcal{I}_p(G)$. If for every $\overline{j} \in \mathbb{Z}/m_I$ satisfying $\gcd(\overline{j},m_I) = \frac{|\text{Cent}(I)|}{p}$ there exists a $(G,I)$-Galois cover with ramification invariant $\frac{j}{m_I}$ for some $j \equiv \overline{j} \bmod m_I$, then all but finitely many $\sigma \in \sigma_p(I)$ occur for $(G,I)$-Galois covers.
\end{lemma}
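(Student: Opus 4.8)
The plan is to reduce the statement to a question about inertia jumps lying in arithmetic progressions modulo $m_I$, and then to fill each progression in using Lemma~\ref{lma:makingjumps}.

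Set $d := |\text{Cent}(I)|/p$. Every element of $\sigma_p(I)$ has the form $j/m_I$ with $j>m_I$, $p\nmid j$, and $\gcd(j,m_I)=d$; its residue $\bar j := j \bmod m_I$ then satisfies $\gcd(\bar j, m_I)=d$, so the residues occurring among elements of $\sigma_p(I)$ are exactly the elements of the finite set $R := \{\bar j \in \mathbb{Z}/m_I : \gcd(\bar j, m_I)=d\}$, and for fixed $\bar j \in R$ the admissible numerators $j$ run through the arithmetic progression $\{\bar j + \ell m_I : \ell \geq 1\}$ with the values divisible by $p$ (a single residue class modulo $p$, since $\gcd(m_I,p)=1$) removed. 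Two observations make this convenient: adding $m_I$ to $j$ does not change $\gcd(j,m_I)$, so the gcd constraint propagates automatically along each progression; and, as recalled in Section~\ref{sec:ramgrps}, the inertia jump of any $(G,I)$-Galois cover is automatically prime to $p$, so the constraint $p\nmid j$ is satisfied by every jump that occurs — the values with $p\mid j$ are not in $\sigma_p(I)$ and need not be realized.

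Now fix $\bar j\in R$. By hypothesis there is a $(G,I)$-Galois cover with ramification invariant $j_0/m_I$ for some $j_0\equiv\bar j \pmod{m_I}$; this is the base of an induction. Applying Lemma~\ref{lma:makingjumps} with $H=G$ to a $(G,I)$-Galois cover of inertia jump $j$ yields a $(G,I)$-Galois cover of inertia jump $j+i\,m_I$ for some positive integer $i$, and iterating produces $(G,I)$-Galois covers realizing every sufficiently large jump congruent to $\bar j$ modulo $m_I$. Ranging over the finitely many $\bar j\in R$ and discarding, in each progression, the finitely many jumps below $j_0$, we conclude that all but finitely many $\sigma\in\sigma_p(I)$ occur for $(G,I)$-Galois covers.

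The step that needs care is the iteration: Lemma~\ref{lma:makingjumps} as stated only increases $j$ by \emph{some} multiple $i\,m_I$ of $m_I$, so naive iteration yields an infinite but a priori sparse set of jumps, not a full tail of the progression $\{\bar j + \ell m_I\}$. To close this gap I would look inside Lemma~\ref{lma:makingjumps}, that is, at the deformation result \cite[Theorem~2.2.2]{MR2016596} it rests on, and check that the increment can always be taken to be $i=1$, with $i=2$ forced only in the unique residue class of $j$ modulo $p$ for which $j+m_I$ would be divisible by $p$ — a class excluded from $\sigma_p(I)$ in any case, and in which one cannot have two consecutive bad values since $\gcd(m_I,p)=1$. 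With $i\in\{1,2\}$, iteration from $j_0$ reaches every $j\equiv\bar j\pmod{m_I}$ with $j\ge j_0$ and $p\nmid j$, which is exactly what is needed. This is the only genuine obstacle; the rest is the bookkeeping of residues and gcd's described above.
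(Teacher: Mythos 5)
Your overall strategy --- partition $\sigma_p(I)$ into arithmetic progressions of numerators modulo $m_I$, note that the $\gcd$ condition and the condition $p\nmid j$ are compatible with this partition, and then fill in a full tail of each progression starting from the hypothesized base cover --- is exactly the paper's. The one issue you flag is real: Lemma~\ref{lma:makingjumps} only increases the jump by \emph{some} positive multiple of $m_I$, so iterating it gives an infinite but possibly sparse set of jumps in each residue class, which is not enough to conclude that all but finitely many elements of $\sigma_p(I)$ occur. The paper closes this gap not by reopening the deformation argument behind Lemma~\ref{lma:makingjumps}, but by citing \cite[Lemma~3.2.3]{MR2016596} directly, which states precisely the ``full tail'' result you need: if the inertia jump $j$ occurs for a $(G,I)$-Galois cover, then \emph{every} $j' > j$ with $j' \equiv j \bmod m_I$ (and $p \nmid j'$) occurs for some $(G,I)$-Galois cover. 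So your instinct to go back to Pries's paper is correct; the statement to invoke is her Lemma~3.2.3 rather than Theorem~2.2.2, and with that substitution your argument is complete and coincides with the paper's.
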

\begin{proof}
In \cite[Lemma~3.2.3]{MR2016596} it is shown that if the inertia jump $j$ occurs for a $(G,I)$-Galois cover, then any $j' > j$ such that $j' \equiv j \bmod m_I$ occurs for some $(G,I)$-Galois cover. Consequently, if there exists a $(G,I)$-Galois cover with inertia jump $j \equiv \overline{j} \bmod m_I$ for each equivalence class $ \overline{j} \in \mathbb{Z}/m_I$ satisfying $\gcd(\overline{j},m_I) = \frac{|\text{Cent}(I)|}{p}$, then all but possibly a few potential inertia jumps smaller than $j$ occur for that equivalence class. Because $I$ has order strictly divisible by $p$, the jump $j'$ corresponds to the ramification invariant $\frac{j'}{m_I} \in \sigma_p(I)$.
\end{proof}

\begin{prop}\label{prop:infjumps}
Suppose $p$ is prime, $G$ is a finite quasi-$p$ group with order strictly divisible by $p$, $S\in \text{\em{Syl}}_p(G)$, and $H$ is a subgroup of $G$ for which there exists an $(H, \text{N}_H(S))$-Galois cover. If for all $I\in \mathcal{I}_p(G)$ there exists a finite group $D$ such that $I = I' \times D$ for some $I' \in \mathcal{I}_p(H)$, then all but finitely many ramification invariants $\sigma \in \sigma_p(G)$ occur.
\end{prop}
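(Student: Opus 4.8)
The plan is to reduce to a statement about residue classes of inertia jumps, to observe that a direct prime-to-$p$ factor in an inertia group is invisible to the set of possible ramification invariants, and then to transport covers from $H$ to $G$ by the patching result of Theorem~\ref{thm:FromSubgroups}.

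First I would apply the preceding lemma. Since $G$ is finite, $\mathcal{I}_p(G)$ is finite and $\sigma_p(G)=\bigcup_{I\in\mathcal{I}_p(G)}\sigma_p(I)$, so it is enough to show: for each $I\in\mathcal{I}_p(G)$ and each residue class $\overline{j}\in\mathbb{Z}/m_I$ with $\gcd(\overline{j},m_I)=|\text{Cent}(I)|/p$ there is a $(G,I)$-Galois cover with ramification invariant $j/m_I$ for some $j\equiv\overline{j}\pmod{m_I}$. Fix such an $I$ and write $I=I'\times D$ with $I'\in\mathcal{I}_p(H)$. Because $p$ strictly divides $|G|$, a Sylow $p$-subgroup $S$ of $I$ has order $p$ and lies in the factor $I'$; hence $D$ is cyclic of some order $d$ prime to $p$, $I'\cong\mathbb{Z}/p\rtimes\mathbb{Z}/m_{I'}$, $m_I=m_{I'}d$ with $\gcd(m_{I'},d)=1$, and $\text{Cent}(I)=\text{Cent}(I')\times D$, so $|\text{Cent}(I)|=d\,|\text{Cent}(I')|$.

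Next I would record the key identity $\sigma_p(I)=\sigma_p(I')$. Unwinding Definition~\ref{defn:potentialJumps}: an element $j/m_I\in\sigma_p(I)$ satisfies $\gcd(j,m_I)=|\text{Cent}(I)|/p=d\cdot(|\text{Cent}(I')|/p)$, which (as $\gcd(m_{I'},d)=1$) forces $d\mid j$; writing $j=dj_1$ one gets $j/m_I=j_1/m_{I'}\in\sigma_p(I')$, and the argument is reversible. Thus $\sigma_p(G)=\bigcup_{I'\in\mathcal{I}_p(H)}\sigma_p(I')$, and each $I'$ occurring here lies in $\mathcal{I}_p(G)$ since the $G$-conjugates of $S\le I'$ generate the normal subgroup $p(G)=G$. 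So it now suffices to realize, for each $I'\in\mathcal{I}_p(H)$ and each admissible residue class $\overline{j}\pmod{m_{I'}}$, a $(G,I')$-Galois cover whose ramification invariant is $j/m_{I'}$ with $j\equiv\overline{j}$. For this I would first obtain $(H,I')$-covers whose inertia jumps fill out every admissible residue class modulo $m_{I'}$ — starting from the hypothesised $(H,\text{N}_H(S))$-cover and using Lemma~\ref{lma:makingjumps} (applied within $H$) together with \cite[Lemma~3.2.3]{MR2016596} to move through a residue class — and then apply Theorem~\ref{thm:FromSubgroups}, which is available because the Sylow $p$-subgroups of $G$ have order $p$ and $I'\in\mathcal{I}_p(H)$, to upgrade each $(H,I')$-cover to a $(G,I')$-cover; a last application of Lemma~\ref{lma:makingjumps} and \cite[Lemma~3.2.3]{MR2016596} places the inertia jump of this $(G,I')$-cover in the desired residue class. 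Combined with the preceding lemma, this gives the proposition.

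The hard part will be the construction of the $(H,I')$-covers for \emph{every} $I'\in\mathcal{I}_p(H)$, including those strictly between $S$ and $\text{N}_H(S)$, and with enough inertia jumps to reach every residue class modulo $m_{I'}$ permitted by the centralizer condition. When $H$ is a group for which Abhyankar's Inertia Conjecture is already known — e.g.\ $H\cong\text{PSL}_2(p)$ by \cite[Corollary~3.3]{MR2003452} — all such $(H,I')$-covers exist outright and the jump-manipulation lemmas supply the required range of jumps; in general one must deduce from the existence of a single $(H,\text{N}_H(S))$-cover that all smaller inertia groups in $\mathcal{I}_p(H)$ are realizable. A secondary, arithmetic, obstacle is to verify that the jumps produced by Lemma~\ref{lma:makingjumps} and Theorem~\ref{thm:FromSubgroups} (which a priori scale a jump by a factor coprime to $m_{I'}$) can be steered into \emph{every} admissible residue class modulo $m_{I'}$, and not merely into some of them.
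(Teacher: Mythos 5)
Your reduction is sound as far as it goes: passing to residue classes of jumps via the preceding lemma, and observing that a prime-to-$p$ direct factor $D$ leaves the set of potential invariants unchanged, $\sigma_p(I)=\sigma_p(I')$, is exactly the role the hypothesis $I=I'\times D$ plays (you make this more explicit than the paper does). But the argument does not close. The step you yourself flag as ``the hard part'' --- producing covers whose inertia group is an intermediate subgroup $S\subsetneq I'\subsetneq \text{N}_H(S)$, with jumps in every admissible residue class modulo $m_{I'}$ --- is the actual content of the proposition, and you give no argument for it. The hypothesis supplies only a single $(H,\text{N}_H(S))$-Galois cover; Abhyankar's Inertia Conjecture is not assumed for $H$, so the $(H,I')$-covers cannot be taken ``outright,'' and none of the tools you invoke changes the isomorphism type of the inertia group: Lemma~\ref{lma:makingjumps} and \cite[Lemma~3.2.3]{MR2016596} move the jump within a fixed inertia group, and Theorem~\ref{thm:FromSubgroups} transports a cover from $G_2$ to $G_1$ with the \emph{same} inertia group.

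The missing ingredient is \cite[Proposition~3.1]{MR2003452}, and the paper deploys it at the level of $G$ rather than $H$. One first transports the single $(H,\text{N}_H(S))$-cover to a $(G,\text{N}_H(S))$-cover with invariant $\gamma(j+im_I)/m_I$ (Lemma~\ref{lma:makingjumps} together with the Different Inertia case of \cite[Corollary~2.3.1]{MR2016596}); then, for a target class $\overline{j}$, one chooses $d$ with $d\gamma j\equiv\overline{j}\bmod m_I$ and applies \cite[Proposition~3.1]{MR2003452}, which simultaneously shrinks the inertia group to the subgroup $I'\subset \text{N}_H(S)$ of order $pm_I/\gcd(m_I,d)$ and replaces the jump by $d\gamma(j+im_I)/\gcd(m_I,d)$, landing the invariant in the class $\overline{j}$. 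That one result disposes of both obstacles you list --- realizing the intermediate inertia groups and steering into every admissible residue class --- and without it, or a substitute for it, your proof is incomplete.
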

\begin{proof}
Let $I = \text{N}_H(S)$ and note $I \cong \mathbb{Z}/p \rtimes \mathbb{Z}/m_I$ by the Schur-Zassenhaus Theorem \cite[pg.\ 132]{MR0091275}. Lemma~\ref{lma:makingjumps} and the Different Inertia case of \cite[Corollary~2.3.1]{MR2016596} show that there exists a $(G,I)$-Galois cover with ramification invariant $\sigma = \frac{\gamma( j + im_I)}{m_I}$ where $j$ and $\gamma$ are coprime to $m_I$.

Pick an element $\overline{j} \in \mathbb{Z}/m_I$ where $\gcd(\overline{j},p)=\frac{|\text{Cent}(I)|}{p}$. There exists a positive integer $d \in \mathbb{N}$ such that $d\gamma j \equiv \overline{j} \bmod m_I$ and $$\frac{d\gamma(j + im_I)}{\gcd(m_I,d)} \equiv \overline{j} \bmod m_I.$$
Let $I' \subset I$ be the subgroup with order $\frac{pm_I}{\gcd(m_I,d)}$. Applying \cite[Proposition~3.1]{MR2003452} yields a $(G, I')$-Galois cover with inertia jump $j' = \frac{d\gamma(j + i m_I)}{\gcd(m_I,d)}$ and ramification invariant $\sigma = \frac{j'}{m_{I'}}$.
\end{proof}

\begin{remark}
Assume the notation of Proposition~\ref{prop:infjumps}. If $D$ is trivial, then all but finitely many $\sigma \in \sigma_p(G)$ occuring is equivalent to Conjecture~\ref{AIC} being true for $G$ in characteristic $p$.
\end{remark}

\begin{definition}
By $m_G$ we will denote the smallest integer such that $m_G \cdot \sigma_p(G) \subset \mathbb{Z}$.
\end{definition}

\begin{thm}\label{thm:infjumps}
As a result of Proposition~\ref{prop:infjumps}, we can verify the occurrence of all but finitely many $\sigma \in \sigma_p(G)$ for the groups and{} characteristics in Table~\ref{tbl:aicgroups} as well as the groups and characterstics in Table~\ref{tbl:jumpsgroups1} and Table~$\ref{tbl:jumpsgroups2}$.
\begin{table}[!h]
\small
\centering
\begin{tabular}{|l | l | l | l | l | l | l | l | l |}
\hline
$p=5$ &               &         & & & $p=7$           &                                      &        & \\  \hline
$G$   & $N_G(S)$      & $m_G$   & $H$ &&   $G$           & $N_G(S)$                             & $m_G$  & $H$\\ \hline 
$\text{J}_1$ & $D_5 \times S_3$ & 2 & $\text{PSL}_2(11)$ & & $\text{M}_{23}$ & $(\mathbb{Z}/7 \rtimes \mathbb{Z}/3) \times \mathbb{Z}/2$ & 3& $\text{PSL}_2(7)$ \\ \hline
$\text{J}_3$ &$D_5 \times S_3$  & 2 & $\text{PSL}_2(19)$ & & $\text{M}_{24}$ & $(\mathbb{Z}/7 \rtimes \mathbb{Z}/3) \times \text{S}_3$ & 3 & $\text{PSL}_2(7)$ \\ \hline
& & & & & $\text{McL}$    & $(\mathbb{Z}/7 \rtimes \mathbb{Z}/3) \times \mathbb{Z}/2$ & 3 & $\text{PSL}_2(7)$\\ \hline
& & & & & $\text{Ru}$    & $D_7 \rtimes A_4$ & 6 & $\text{PSL}_2(13)$\\ \hline
\end{tabular}
\caption{Groups in characteristics 5 and 7 for which all but finitely many jumps are verified along with structure of the normalizer of $S\in\text{Syl}_p(G)$, the value of $m_G$, and the subgroup $H$ for which Proposition~\ref{prop:infjumps} is applied.}
\label{tbl:jumpsgroups1}
\end{table}

\begin{table}[!h]
\small
\centering
\begin{tabular}{| l | l | l | l |}
\hline
$p=11$          &         &                                                       & \\  \hline
$G$             & $N_G(S)$ & $m_G$                                                & $H$ \\ \hline 
$\text{Co}_{3}$ & $(\mathbb{Z}/11 \rtimes \mathbb{Z}/5) \times \mathbb{Z}/2$ & 5  & $\text{PSL}_2(11)$\\ \hline
$\text{F}_{22}$ & $(\mathbb{Z}/11 \rtimes \mathbb{Z}/5) \times \mathbb{Z}/2$ & 5  & $\text{PSL}_2(11)$\\ \hline
\end{tabular}
\caption{Groups in characteristic 11 for which all but finitely many jumps are verified along with structure of the normalizer of $S\in\text{Syl}_p(G)$, the value of $m_G$, and the subgroup $H$ for which Proposition~\ref{prop:infjumps} is applied.}
\label{tbl:jumpsgroups2}
\end{table}
\end{thm}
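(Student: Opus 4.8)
The plan is to verify the numerical hypotheses of Proposition~\ref{prop:infjumps} for each group $G$ appearing in Tables~\ref{tbl:jumpsgroups1} and~\ref{tbl:jumpsgroups2} (the Table~\ref{tbl:aicgroups} cases already follow from Corollary~\ref{cor:AICInduced} together with the remark that all-but-finitely-many is weaker than Conjecture~\ref{AIC} when $D$ is trivial). For a fixed pair $(G,p)$ from the tables, let $S \in \mathrm{Syl}_p(G)$, which by construction is isomorphic to $\ZZ/p$. I would first read off the structure of $\mathrm{N}_G(S)$ from the ATLAS (this is the data recorded in the third column of each table) and identify the candidate subgroup $H \cong \mathrm{PSL}_2(\ell)$ for an appropriate prime power $\ell$; in every row $H$ is a known maximal (or near-maximal) subgroup of $G$ whose order is divisible by exactly the first power of $p$. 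The two things to check are: (i) there exists an $(H, \mathrm{N}_H(S))$-Galois cover, and (ii) for every $I \in \mathcal{I}_p(G)$ there is a finite group $D$ with $I \cong I' \times D$ for some $I' \in \mathcal{I}_p(H)$.

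For (i), the subgroup $H$ is always of the form $\mathrm{PSL}_2(\ell)$ with $p$ strictly dividing $|H|$, and $\mathrm{N}_H(S) \cong \ZZ/p \rtimes \ZZ/((p-1)/2)$ is precisely the maximal inertia group allowed for $H$. Since Conjecture~\ref{AIC} is known for $\mathrm{PSL}_2(p)$ by \cite[Corollary~3.3]{MR2003452}, and more generally \cite[Corollary~2.4]{MR2003452} produces $(\mathrm{PSL}_2(\ell), I)$-covers for all admissible $I$, the cover required in (i) exists. (For the $p=7$ rows with $H \cong \mathrm{PSL}_2(7)$ and the $p=5$ rows with $H \cong \mathrm{PSL}_2(11)$ or $\mathrm{PSL}_2(19)$, $p$ still strictly divides $|\mathrm{PSL}_2(\ell)|$, so the same citation applies; for $\mathrm{Ru}$ with $H \cong \mathrm{PSL}_2(13)$ one checks $7 \parallel |\mathrm{PSL}_2(13)|$ and proceeds identically.) For (ii), the point is a group-theoretic fact about how Sylow $p$-normalizers sit inside $G$: since $S \cong \ZZ/p$, any $I \in \mathcal{I}_p(G)$ is $\ZZ/p \rtimes \ZZ/m$ with $m \mid |\mathrm{N}_G(S)/S|$, and inspecting the column $\mathrm{N}_G(S)$ shows it has the form $(\ZZ/p \rtimes \ZZ/((p-1)/2)) \times D_0$ for an explicit abelian or small group $D_0$ (e.g. $\ZZ/2$, $\mathrm{S}_3$, $A_4$) that centralizes $S$; every subgroup $I$ between $S$ and $\mathrm{N}_G(S)$ then decomposes as $I' \times D$ with $I' \le \mathrm{N}_H(S) \in \mathcal{I}_p(H)$ and $D \le D_0$. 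This is exactly the hypothesis of Proposition~\ref{prop:infjumps}, so applying it yields the conclusion for that row.

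The genuinely substantive work — and the part I expect to be the main obstacle — is the case-by-case extraction and verification of the subgroup structure: confirming for each of the nine or ten table entries that $G$ really does contain the stated $\mathrm{PSL}_2(\ell)$, that the Sylow $p$-subgroup is cyclic of order $p$, and that $\mathrm{N}_G(S)$ has precisely the claimed direct-product shape with the $\mathrm{PSL}_2(\ell)$-part split off as a direct factor. This is where one must be careful that $D_0$ centralizes (not merely normalizes modulo something) $S$, so that the decomposition $I = I' \times D$ is a genuine internal direct product and not merely an extension; for the rows where $\mathrm{N}_G(S)$ involves $\mathrm{S}_3$ or $A_4$ rather than a cyclic factor, I would double-check via the ATLAS character tables and maximal-subgroup data (and, if needed, a short computation in GAP) that the non-abelian complement acts trivially on $S$. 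Once these structural facts are in hand for every row, the rest is a direct invocation of Proposition~\ref{prop:infjumps}, and the theorem follows.

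\begin{proof}
For the groups and characteristics in Table~\ref{tbl:aicgroups}, Corollary~\ref{cor:AICInduced} verifies Conjecture~\ref{AIC}, which (by the remark following Proposition~\ref{prop:infjumps}, taking $D$ trivial) is stronger than the occurrence of all but finitely many $\sigma \in \sigma_p(G)$.

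It remains to treat the rows of Table~\ref{tbl:jumpsgroups1} and Table~\ref{tbl:jumpsgroups2}. Fix such a pair $(G,p)$ and let $S \in \mathrm{Syl}_p(G)$; in each case $p$ strictly divides $|G|$ and $S \cong \ZZ/p$. Let $H$ be the subgroup of $G$ listed in the final column, so that $H \cong \mathrm{PSL}_2(\ell)$ for the relevant prime power $\ell$ with $p \parallel |H|$, and $\mathrm{N}_H(S) \cong \ZZ/p \rtimes \ZZ/((p-1)/2)$.

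First we verify that an $(H, \mathrm{N}_H(S))$-Galois cover exists. Since $\mathrm{N}_H(S)$ is the largest element of $\mathcal{I}_p(H)$ and Conjecture~\ref{AIC} holds for $\mathrm{PSL}_2(p)$ in characteristic $p \ge 5$ by \cite[Corollary~3.3]{MR2003452}, while \cite[Corollary~2.4]{MR2003452} produces $(\mathrm{PSL}_2(\ell),I)$-Galois covers for every $I \in \mathcal{I}_p(\mathrm{PSL}_2(\ell))$ with $p \parallel |\mathrm{PSL}_2(\ell)|$, the required cover exists in every row.

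Next, pick any $I \in \mathcal{I}_p(G)$. As $S \cong \ZZ/p$, we may conjugate so that $S \subseteq I \subseteq \mathrm{N}_G(I) \subseteq \mathrm{N}_G(S)$, and $I \cong \ZZ/p \rtimes \ZZ/m_I$ with $m_I$ dividing $|\mathrm{N}_G(S)/S|$. Inspecting the third column of the relevant table, $\mathrm{N}_G(S)$ has the form $(\ZZ/p \rtimes \ZZ/((p-1)/2)) \times D_0$ where $D_0$ centralizes $S$. Consequently every subgroup $I$ with $S \subseteq I \subseteq \mathrm{N}_G(S)$ splits as an internal direct product $I \cong I' \times D$ with $I' = I \cap (\ZZ/p \rtimes \ZZ/((p-1)/2)) \in \mathcal{I}_p(H)$ and $D = I \cap D_0$. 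Thus for every $I \in \mathcal{I}_p(G)$ there is a finite group $D$ with $I \cong I' \times D$ for some $I' \in \mathcal{I}_p(H)$.

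The hypotheses of Proposition~\ref{prop:infjumps} are therefore satisfied for $(G,p)$, so all but finitely many ramification invariants $\sigma \in \sigma_p(G)$ occur. This completes the proof.
\end{proof}
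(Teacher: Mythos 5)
Your overall strategy matches the paper's: the paper's proof is a two-sentence assertion that every row satisfies the hypotheses of Proposition~\ref{prop:infjumps}, citing \cite[Corollary 3.3]{MR2003452} when $H\cong\text{PSL}_2(p)$ and \cite[Theorem 3.6]{MR2003452} otherwise, and you are doing the same case-by-case verification. (Your handling of the Table~\ref{tbl:aicgroups} groups via Corollary~\ref{cor:AICInduced} plus the remark is a cosmetic detour; the paper runs those through Proposition~\ref{prop:infjumps} directly, and since $D$ is trivial there the two readings coincide.)

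However, the uniform structural claims in your written proof are false for some rows, so the verification does not actually go through as stated. You assert that in every row $\mathrm{N}_H(S)\cong\ZZ/p\rtimes\ZZ/((p-1)/2)$ and that $\mathrm{N}_G(S)\cong(\ZZ/p\rtimes\ZZ/((p-1)/2))\times D_0$ with $D_0$ centralizing $S$. For the row $G=\mathrm{Ru}$, $p=7$, $H\cong\mathrm{PSL}_2(13)$, the Sylow $7$-subgroup of $\mathrm{PSL}_2(13)$ lies in a torus of order $(13+1)/2=7$ whose normalizer is dihedral, so $\mathrm{N}_H(S)\cong D_7=\ZZ/7\rtimes\ZZ/2$, not $\ZZ/7\rtimes\ZZ/3$; and the table itself records $\mathrm{N}_G(S)\cong D_7\rtimes A_4$ with $m_G=6$, which is a genuinely nonsplit-looking semidirect product, not a direct product of the claimed shape. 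A similar issue arises for $J_3$ at $p=5$ with $H\cong\mathrm{PSL}_2(19)$, where $5\mid(19+1)/2$ so $\mathrm{N}_H(S)$ is dihedral of order $20$, not $\ZZ/5\rtimes\ZZ/2$. For these rows the decomposition $I\cong I'\times D$ with $I'\in\mathcal{I}_p(H)$ still needs to be exhibited, but by a different (row-specific) computation than the one you wrote down. Relatedly, your citation of \cite[Corollary~2.4]{MR2003452} for the existence of $(\mathrm{PSL}_2(\ell),\mathrm{N}_H(S))$-covers covers only the equal-characteristic case $\ell=p$; for the cross-characteristic rows ($\mathrm{PSL}_2(11)$ or $\mathrm{PSL}_2(19)$ at $p=5$, $\mathrm{PSL}_2(13)$ at $p=7$) the paper instead invokes \cite[Theorem~3.6]{MR2003452}, and your argument needs that result (or an equivalent) to get step (i) off the ground.
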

\begin{proof}
All groups in Tables \ref{tbl:aicgroups}, \ref{tbl:jumpsgroups1} and \ref{tbl:jumpsgroups2} satisfy the hypotheses of Proposition~\ref{prop:infjumps}. In the cases $H\cong \text{PSL}_2(p)$ see \cite[Corollary 3.3]{MR2003452}. For all other cases see \cite[Theorem 3.6]{MR2003452}.
\end{proof}

\section{A Refinement for $\text{M}_{11}$ in characteristic 11}\label{sec:mathieu11}
We realize improved lower bounds on the ramification invariants for $(\M, I)$-Galois covers in characteristic $11$. Specifically all but eight of the possible ramification invariants are shown to occur. We prove Theorem~\ref{thm:thethm} in the following way. Lemma~\ref{lma:possibleRI} describes the possible minimal ramification invariants for an unramified $\M$-Galois cover of $\Aff_k^1$. Lemma~\ref{lma:jumpsToGenera} determines the genera of a quotient cover given a ramification invariant. Then to show that $\sigma=8/5$ occurs with inertia group isomorphic to $\ZZ/11 \rtimes \ZZ/5$, Proposition~\ref{prop:j8} studies a cover in characteristic 11 provided by Serre in \cite{MR1162313}. To show that $\sigma = 2$ occurs with inertia group isomorphic to $\ZZ/11$, Proposition~\ref{prop:j10} studies the semi-stable reduction of a characteristic 0 cover to characteristic 11. Finally, the larger ramification invariants are shown to occur via results of \cite{MR2250044}.

The techniques in this section depend on the $p$-purity of $\text{M}_{11}$ and existence of a proper quasi-$p$ subgroup of sufficiently small index relative to the size of $p$.

\subsection{Intermediate genus formula}\label{sec:intermediateGenus}
Let $G$ be a finite simple group and let $C = (C_1, C_2, C_3)$ be a triple of conjugacy classes in $G$ rational over a field $L$ such that
\begin{align*}
\{(g_1,g_2,g_3) \in C : g_i \in C_i\text{, } g_i \not=1\text{, and } g_1g_2g_3 = 1\} \not= \oslash.
\end{align*}
Assume $\text{char}(L)\nmid|C_i|$. For such a triple, there exists a tame $G$-Galois cover $Y \to \mathbb{P}_L^1$ branched at three points labeled $P_1$, $P_2$, $P_3$ over which an inertia group is generated by some $g_i \in C_i$.

Fix a subgroup $H\subset G$ and let $X=Y/H$. Consider the $H$-Galois subcover $Y\to X$ and degree $[G:H]$ cover $X \to \Proj^1$. Denote the normalizer of $H$ in $G$ by $N_G(H)$ and the inertia group at a point above $P_i$ by $I_i$.
\begin{lemma}
Consider $G$, $H$, $X$, and $Y$ as above. The genus $g$ of $X$ can be computed as follows
\begin{equation}\label{eqn:intermediateGenusFormula}
g = -[G:H] + 1 + \frac{[G:H]}{2}\sum_{i=1}^{3}\frac{|I_i|-1}{|I_i|} - \frac{[N_G(H):H]}{2}\sum_{i=1}^{3}\frac{|N_G(I_i)|}{|N_H(I_i)|}\frac{|H\cap I_i|-1}{|H\cap I_i|}.
\end{equation}
\end{lemma}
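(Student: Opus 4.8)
The plan is to compute the genus $g$ of $X = Y/H$ by applying the Riemann--Hurwitz formula to the composite map $X \to \Proj^1_L$ and bookkeeping the ramification carefully, using that $Y \to \Proj^1$ is tame so all higher ramification groups vanish and only the inertia orders $|I_i|$ enter. First I would record the basic data: $Y \to \Proj^1$ is a $G$-Galois cover branched at the three points $P_1, P_2, P_3$ with inertia groups (conjugates of) $I_i$ of order $|I_i|$, so by Riemann--Hurwitz together with the fact that $G$ is simple (hence $Y$ has genus determined by the standard three-point formula) one has
\begin{equation*}
2 g_Y - 2 = |G|\left(-2 + \sum_{i=1}^{3} \frac{|I_i|-1}{|I_i|}\right).
\end{equation*}
Then, since $Y \to X$ is $H$-Galois and therefore $2 g_Y - 2 = |H|(2g - 2) + \deg(\mathfrak{d}_{Y/X})$, dividing the ambient relation by $[G:H]$ will produce the first three terms $-[G:H] + 1 + \frac{[G:H]}{2}\sum_i \frac{|I_i|-1}{|I_i|}$ of \eqref{eqn:intermediateGenusFormula} as the "expected" contribution; the subtracted last term must then be exactly the correction $\frac{1}{2}\deg(\mathfrak{d}_{Y/X})/|H|$ reorganized, i.e. the ramification of $Y \to X$ itself.

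The heart of the argument is thus the count of the discriminant of $Y \to X$ over each branch point $P_i$. Fix $i$ and work over $P_i$. The points of $X$ above $P_i$ correspond to $H$-orbits on $G/I_i$ (the fiber of $Y \to \Proj^1$), equivalently to double cosets $H \backslash G / I_i$; a point corresponding to the double coset of $g$ has local ramification index in $Y \to X$ equal to $|H \cap g I_i g^{-1}|$, contributing $|H \cap g I_i g^{-1}| - 1$ to the different exponent (tameness), and the $H$-orbit it represents has size $[H : H \cap g I_i g^{-1}]$. Summing over all points of $Y$ above $P_i$ and dividing by $|H|$, the contribution to $\deg(\mathfrak{d}_{Y/X})$ from $P_i$ is $[G:I_i] \cdot \frac{|H \cap I_i| - 1}{|H \cap I_i|}$ after using that all $I_i$ in a fixed conjugacy class behave identically and that $G$ acts transitively. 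The subtlety — and where I expect the main obstacle to lie — is matching this to the stated form $[N_G(H):H] \sum_i \frac{|N_G(I_i)|}{|N_H(I_i)|} \frac{|H \cap I_i| - 1}{|H \cap I_i|}$: one must verify the combinatorial identity that counts points of $X$ over $P_i$ weighted by their ramification in terms of normalizers, i.e. that the number of points of $X$ lying over $P_i$ with a prescribed intersection $H \cap I_i$ is governed by the index $\frac{|N_G(I_i)|}{|N_H(I_i)|}$ up to the overall factor $[N_G(H):H]$. This requires care with the orbit--stabilizer correspondence on double cosets and the distinction between the inertia group $I_i$ of $Y \to \Proj^1$ at a point of $Y$ and the inertia group $H \cap I_i$ of the subcover $Y \to X$ at the image point.

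Concretely, the key steps in order are: (1) write Riemann--Hurwitz for $Y \to \Proj^1$ and solve for $g_Y$; (2) write Riemann--Hurwitz for $Y \to X$ as $2g_Y - 2 = |H|(2g-2) + \deg(\mathfrak{d}_{Y/X})$; (3) for each $i$, decompose the fiber of $X$ over $P_i$ via $H$-orbits on $G/I_i$ and compute the local different exponents using tameness, expressing the $P_i$-contribution to $\deg(\mathfrak{d}_{Y/X})$; (4) re-express that contribution using normalizer indices to obtain the third term of \eqref{eqn:intermediateGenusFormula}; (5) substitute and simplify, cancelling $|G|/[G:H] = |H|$ throughout, to isolate $g$. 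Step (4) is the delicate one; everything else is a routine manipulation of Riemann--Hurwitz once the orbit counts are in hand.
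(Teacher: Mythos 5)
Your overall skeleton is the same as the paper's: write Riemann--Hurwitz once for $Y\to\Proj_L^1$ and once for $Y\to X$, then eliminate $\genus(Y)$ and solve for $g$. Steps (1), (2), and (5) are fine and are exactly what the paper does. The entire content of the lemma, however, sits in the ramification term of the second Riemann--Hurwitz formula, and that is where your proposal has a genuine gap.

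Concretely, in step (3) you claim that the contribution of $P_i$ to $\deg(\mathfrak{d}_{Y/X})$ is $[G:I_i]\cdot\frac{|H\cap I_i|-1}{|H\cap I_i|}$ ``after using that all $I_i$ in a fixed conjugacy class behave identically.'' That uniformity is false: the correct quantity is $\sum_{gI_i\in G/I_i}\bigl(|H\cap gI_ig^{-1}|-1\bigr)$, and the order of $H\cap gI_ig^{-1}$ genuinely depends on the coset. In the application of this lemma ($G\cong\M$, $H\cong\PSL$, $I_i\cong\ZZ/11$), only $60$ of the $720$ cosets $gI_i$ give a nontrivial intersection, the true contribution is $60\cdot 10=600$, and your expression $720\cdot\frac{10}{11}$ is not even an integer. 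The normalizer indices in \eqref{eqn:intermediateGenusFormula} are precisely the bookkeeping for this non-uniformity: one groups the cosets $gI_i$ by the conjugate $gI_ig^{-1}$ (each conjugate accounts for $[N_G(I_i):I_i]$ cosets) and then counts the conjugates meeting $H$ maximally via $[H:N_H(I_i)]$, which is how the factor $\frac{|N_G(I_i)|}{|N_H(I_i)|}$ arises. You correctly identify this reconciliation as ``the delicate step (4),'' but you leave it unexecuted and start from an intermediate formula that contradicts the target, so the argument as written does not close. The double-coset decomposition $H\backslash G/I_i$ you set up is the right framework; what is missing is carrying out that count honestly, keeping track of which conjugates of $I_i$ meet $H$ nontrivially, rather than assuming a single representative governs all of them.
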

\begin{proof}

Write the Riemann-Hurwitz Formulas for the covers $Y\to\Proj_L^1$ and $Y\to X$:
\begin{equation} \label{eq:RH1}
2\genus(Y) -2 = |G|(2\genus(\Proj_L^1)-2) + |G|\sum_{i=1}^{3}\frac{1}{|I_i|}(|I_i|-1);
\end{equation}
\begin{equation} \label{eq:RH2}
2\genus(Y) -2 = |H|(2g-2) + |N_G(H)|\sum_{i=1}^{3}\frac{|N_G(I_i)|}{|N_H(I_i)|}\frac{|H\cap I_i|-1}{|H\cap I_i|}.
\end{equation}
Solving this system of equations for $g$ yields \eqref{eqn:intermediateGenusFormula}.
\end{proof}

\subsection{Vanishing cycles}\label{sec:vanishingcycles}
Let $\phi \colon Y_0\to(X_0 = \Proj_K^1)$ be a $G$-Galois cover defined over a complete discrete valuation field $K$ branched at 0, 1, and $\infty$. Assume the characteristic of the residue field $k$ is $p > 0$ and $p$ strictly divides $|G|$. To force bad reduction, assume that $p$ divides the order of the inertia group at some ramified point. Then $\phi$ has a stable reduction $\phi_s \colon Y_s \to Z_s$ with the following properties \cite[Theorem~2]{MR1992833}.
\begin{itemize}
  \item The base $Z_s$ is a tree of projective lines.
  \item There is a unique original component, denoted $Z$, which each other component of $Z_s$ intersects.
\end{itemize}
The components of $Z_s$ other than Z are called tails. The restriction $\phi_\alpha$ of $\phi_s$ to a tail $X_\alpha$ is a cover of $\Proj_k^1$. The point on a tail $X_\alpha$ where it intersects the original component is called $\infty_\alpha$. A tail cover $X_\alpha$ is called a new tail if it is only ramified at $\infty_\alpha$. Let $P_i$ be the point of $Z_s$ to which $i=0,1,\infty$ specializes. A tail $X_\alpha$ is called a primitive tail if one of the original branch points specializes to it. If $G$ is $p$-pure then the cover is connected over one tail \cite[Proposition~3.1.7]{MR1670532}.

Let $\BB$ be the index set of tails. Each $\alpha \in \BB$ uniquely identifies a tail cover $\phi_\alpha$ and $\sigma_\alpha$ denotes the ramification invariant at $\infty_\alpha$. Let $\BB_\text{new}$ be the index set of new tails, and $\BB_0$ the index set of primitive tails. When all inertia groups have order divisible by $p$, there are no primitive tails.

For $|\BB_0| = 3$, the vanishing cycles formula in \cite[Section 3.4.4]{MR1670532} yields the following.
\begin{equation}\label{eqn:rvc}
\sum_{\alpha \in \BB_\text{new}} (\sigma_\alpha-1) = 1.
\end{equation}

\subsection{Realizing small jumps for $\text{M}_{11}$ in characteristic 11}
Recall from Section \ref{sec:ramgrps} that the inertia group at Q is isomorphic to $\ZZ/11 \rtimes \ZZ/m_I$ where $\gcd(11,m)=1$. In $\M$ the normalizer of a subgroup isomorphic to $\ZZ/11$ is of the form $\ZZ/11 \rtimes \ZZ/5$; thus $m_I=5$ or $m_I = 1$.
\begin{lemma}\label{lma:possibleRI}
There exists an $\text{M}_{11}$-Galois cover $Y\to\Proj_k^1$, only branched at $\infty$, with ramification invariant $\sigma$ is in the set $\{ \frac{6}{5}, \frac{7}{5}, \frac{8}{5}, \frac{9}{5}, 2 \}$.
\end{lemma}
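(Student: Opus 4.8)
The statement asserts the existence of an $\text{M}_{11}$-Galois cover of $\Proj_k^1$ branched only at $\infty$ whose ramification invariant realizes each value in $\{6/5, 7/5, 8/5, 9/5, 2\}$. The key observation is that $\M$ is quasi-$11$ and $11$-pure (established earlier in the excerpt), and that the Sylow $11$-subgroup has order exactly $11$, so that the inertia group at a ramified point is $\ZZ/11 \rtimes \ZZ/m_I$ with $m_I \in \{1, 5\}$ and the ramification invariant is $\sigma = j/m_I$ for the inertia jump $j$ with $11 \nmid j$. The values in the target set split naturally: $6/5, 7/5, 8/5, 9/5$ require $m_I = 5$ (inertia group $\ZZ/11\rtimes\ZZ/5$, with inertia jumps $j = 6,7,8,9$), while $2 = 10/5 = 2/1$ can be achieved with $m_I = 1$ (inertia group $\ZZ/11$, jump $j = 2$). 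The first step is to invoke the known validity of Conjecture~\ref{AIC} for $\text{PSL}_2(11)$ in characteristic $11$ (\cite[Corollary~3.3]{MR2003452}), which supplies $\text{PSL}_2(11)$-Galois covers of the affine line with inertia groups $\ZZ/11$ and $\ZZ/11\rtimes\ZZ/5$; since $\text{PSL}_2(11) \subset \M$ with index $12$ coprime to $11$ and compatible Sylow normalizers, Theorem~\ref{thm:FromSubgroups} (or Corollary~\ref{cor:AICInduced}) transports these to $\M$-Galois covers with the same inertia groups.

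The subtlety is that Corollary~\ref{cor:AICInduced} only guarantees \emph{some} $\M$-Galois cover with a given inertia group; it does not a priori control the ramification invariant, i.e. which inertia jump $j$ arises. So the second step is to pin down the small jumps. For the inertia group $\ZZ/11 \rtimes \ZZ/5$ one needs jumps $j = 6,7,8,9$; note $\gcd(j,5) = 1$ in each case, consistent with $\sigma_p$ for trivial centralizer contribution. By Lemma~\ref{lma:makingjumps}, jumps can be increased within a residue class mod $m_I = 5$, so it suffices to realize one small jump in each residue class $\overline{j} \in \{1,2,3,4\} \subset \ZZ/5$; the representatives $6,7,8,9$ cover exactly these four classes. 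The minimal jumps here are furnished by explicit covers: the $\text{PSL}_2(11)$-analysis of Bouw–Wewers type or, as the excerpt flags, the cover of Serre in \cite{MR1162313} gives $\sigma = 8/5$ directly (this is Proposition~\ref{prop:j8} referenced later), and the remaining jumps $6/5, 7/5, 9/5$ should follow from the known $\text{PSL}_2(11)$ realizations pushed up via Theorem~\ref{thm:FromSubgroups}, together with Lemma~\ref{lma:makingjumps} to adjust within residue classes when necessary. For $\sigma = 2$ with inertia group $\ZZ/11$, one needs jump $j = 2$ (the minimal possible, since $j > m_I = 1$ and $11 \nmid j$); this is the content of Proposition~\ref{prop:j10}, obtained by analyzing the stable reduction to characteristic $11$ of a suitable characteristic-$0$ three-point cover with Galois group $\M$, using the vanishing-cycles formula \eqref{eqn:rvc} together with $11$-purity to locate the new tail and read off its ramification invariant.

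The main obstacle is the realization of the \emph{minimal} jumps — i.e. showing that the inertia jumps actually attain the small values $6,7,8,9$ (for $m_I = 5$) and $2$ (for $m_I=1$), rather than only some larger representatives of the same residue classes. Lemma~\ref{lma:makingjumps} lets one go up but never down, so the small end must be secured by hand via explicit covers and reduction arguments. The $8/5$ case via Serre's cover and the $\sigma = 2$ case via semistable reduction are the two genuinely new inputs; the others ($6/5, 7/5, 9/5$) should reduce to the already-known $\text{PSL}_2(11)$ facts transported by Theorem~\ref{thm:FromSubgroups}. A secondary technical point to check is that the centralizer condition in Definition~\ref{defn:potentialJumps} is compatible with these constructions, i.e. that $\gcd(j, m_I) = |\text{Cent}(I)|/11$ holds for the realized covers — for $\M$ the relevant centralizers are small, so this is a bookkeeping matter rather than an obstruction.
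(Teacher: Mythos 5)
You have misread the statement, and the misreading leads you to attempt something the paper itself cannot do. The lemma is a weak existential claim: there exists \emph{one} minimal $\M$-Galois cover branched only at $\infty$ whose ramification invariant lies somewhere in $\{6/5,7/5,8/5,9/5,2\}$ --- it is a description of the \emph{possible minimal} invariants, not an assertion that every value in that set is realized. (The sentence immediately after the lemma in the paper makes this explicit, as does the abstract: $6/5$, $7/5$ and $9/5$ are among the eight invariants the paper leaves \emph{open}.) The paper's proof is accordingly one line: since $\M$ is quasi-$11$ and $11$-pure, a theorem of Pries/Raynaud type (\cite[Theorem~3.5]{MR1937117}, resting on the vanishing-cycles bound $\sigma \leq 2$ for a new tail of a stable reduction, together with connectedness over a tail from $11$-purity) produces a cover with $1 < \sigma \leq 2$; combined with $\sigma = j/m_I$, $m_I \in \{1,5\}$, $11 \nmid j$, the only candidates in $(1,2]$ are the five listed values. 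Your proposal proves neither this nor the stronger statement you aimed at.

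Concretely, the step in your argument that fails is the claim that the jumps $j = 6, 7, 9$ (i.e.\ $\sigma = 6/5, 7/5, 9/5$) ``should follow from the known $\text{PSL}_2(11)$ realizations pushed up via Theorem~\ref{thm:FromSubgroups}.'' The patching construction underlying Theorem~\ref{thm:FromSubgroups} does not preserve small jumps: it produces jumps of the form $\gamma(j + i m_I)$ via Lemma~\ref{lma:makingjumps}, and in the paper's own application starting from $j_1 = j_2 = 8$ one obtains $j = 16$, then $24$, then $32$ --- the jumps only grow. Lemma~\ref{lma:makingjumps} and \cite[Lemma~3.2.3]{MR2016596} let you move \emph{up} within a residue class mod $m_I$, never down, so securing a specific small jump requires an explicit cover (Serre's curve for $8/5$) or a semistable-reduction argument ($\sigma = 2$); no such input exists for $6, 7, 9$, which is precisely why they remain unresolved. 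Your reading of the lemma would make it contradict the paper's main theorem.
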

\begin{proof}
 Recall $\M$ is quasi-11, and $\M$ is 11-pure, applying \cite[Theorem~3.5]{MR1937117} proves that a minimal cover exists such that $\sigma \in \{ \frac{6}{5}, \frac{7}{5}, \frac{8}{5}, \frac{9}{5}, 2 \}$.
\end{proof}

Note that this does not solve the inertia conjecture because it does not show that all possible inertia groups occur. The first four ramification invariants are associated to inertia groups isomorphic to $\ZZ/11 \rtimes \ZZ/5$ while $\sigma = 2$ is associated to inertia groups isomorpic to $\ZZ/11$.

To apply results of \cite{MR2777805}, it is important to know the possible degrees of non-Galois covers dominated by an $\M$-Galois cover.
\begin{lemma}\label{lma:degrees}
Let $L$ be an algebraically closed field of any characteristic. Let $X\to\Proj_L^1$ be a degree $d$ non-Galois cover with $\M$-Galois closure $Y\to\Proj_L^1$. If $11\leq d < 22$, then $d\in\{11,12\}$.
\end{lemma}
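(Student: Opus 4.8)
The plan is to reduce the statement to a question about the subgroup lattice of $\M$ and then settle it using the list of maximal subgroups in Table~\ref{tbl:subgroups}.

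First I would pass from the cover to a subgroup. Choosing a geometric point and considering the monodromy action on the fiber of $X$, the hypothesis that the Galois closure $Y \to \Proj_L^1$ is an $\M$-Galois cover means this action realizes $\M$ as a transitive subgroup of the symmetric group on the $d$ points of the fiber; equivalently $X = Y/H$ for a subgroup $H \subset \M$ with $[\M : H] = d$. The action is automatically faithful because $\M$ is simple, and this faithfulness is precisely what makes $Y$ (rather than a proper subcover) the Galois closure, so the core of $H$ is trivial. The hypothesis that $X \to \Proj_L^1$ is non-Galois only says that $H$ is non-normal, which in the range $11 \le d < 22$ holds automatically: a normal proper subgroup of the simple group $\M$ is trivial, giving degree $|\M| = 7920$. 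Hence the lemma is equivalent to the assertion that $\M$ has no subgroup of index $d$ for any $d$ with $13 \le d \le 21$.

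Next I would exploit that $H$ sits inside a maximal subgroup $M$ of $\M$, so $[\M : M]$ divides $[\M : H] = d$. By Table~\ref{tbl:subgroups} the maximal subgroups of $\M$ have orders $720, 660, 144, 120, 48$, hence indices $11, 12, 55, 66, 165$. For $11 \le d < 22$ the indices $55, 66, 165$ are too large to divide $d$; the index $11$ divides such a $d$ only when $d = 11$, and the index $12$ only when $d = 12$. This forces $d \in \{11, 12\}$, and in fact $H$ must then equal the ambient maximal subgroup, so $H \cong \text{M}_{10}$ when $d = 11$ and $H \cong \PSL$ when $d = 12$.

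The one point that needs care — the nearest thing to an obstacle here — is making the translation in the first step valid in arbitrary characteristic, since when $p = 11$ the cover $X \to \Proj_L^1$ can be wildly ramified. This causes no difficulty: the bijection between intermediate covers of a Galois cover and subgroups of its Galois group, together with the identity $d = [\M : H]$ for the degree of the intermediate cover, is purely formal and insensitive to the characteristic; the separability implicit in speaking of a Galois closure is built into the hypotheses. Once this is in place, the proof is the elementary divisibility bookkeeping above.
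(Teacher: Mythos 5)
Your proof is correct and follows essentially the same route as the paper: reduce to the index of a subgroup $H\subset\M$ and rule out $13\le d\le 21$ using the maximal subgroup data of Table~\ref{tbl:subgroups}. Your use of the multiplicativity $[\M:H]=[\M:M][M:H]$ is a slightly cleaner way to phrase the paper's observation that any non-maximal candidate (e.g.\ $\text{A}_6\subset\text{M}_{10}$ of index $22$) already has index at least $22$, and your remarks on faithfulness and characteristic are fine but not needed beyond what the paper assumes.
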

\begin{proof}
The possible degrees of $X\to\Proj_L^1$ correspond to indices of subgroups $H \subset \M$. The only maximal subgroups with an index in the given range are isomorphic to $\text{M}_{10}$ and $\text{PSL}_2(11)$. In particular $[\M:\text{M}_{10}] = 11$ and $[\M:\PSL]  = 12$.
Any other possible degrees must arise from subgroups of $\text{M}_{10}$ or $\PSL$. The only other candidate subgroup is $\text{A}_6 \trianglelefteq \text{M}_{10}$ which has index $22$ in $G$. Consequently $d\in\{11,12\}$.
\end{proof}

\begin{lemma}\label{lma:jumpsToGenera}
Fix an $(\M,I)$-Galois cover $Y \to \Proj_k^1$ with ramification invariant $\sigma = \frac{j}{5}$. Let $\varphi \colon X \to \Proj_k^1$ be a degree $11\leq d < 22$ quotient cover of $Y$. Let $g = \text{genus(X)}$. If $d=11$, then $g = j-5$ and if $d=12$, then $g = j-6$.
\end{lemma}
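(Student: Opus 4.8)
The plan is to compute $\genus(X)$ by running Riemann--Hurwitz twice through the tower $Y \to X \to \Proj^1_k$, exploiting that $Y\to\Proj^1_k$ is branched only at $\infty$, so that all ramification in the tower is concentrated over the finitely many points of $X$ lying above $\infty$. Writing $X=Y/H$ with $[\M:H]=d$, Lemma~\ref{lma:degrees} forces $H\cong\text{M}_{10}$ when $d=11$ and $H\cong\PSL$ when $d=12$, so that $\varphi$ is, up to relabeling, the natural degree $11$ or degree $12$ cover associated to $\M$.

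First I would pin down the higher ramification filtration of $Y\to\Proj^1_k$ at a point $\eta$ over $\infty$. Its inertia group $I_\eta$ is a $\M$-conjugate of $I$, hence (Section~\ref{sec:ramgrps}) either $\ZZ/11$ or $\ZZ/11\rtimes\ZZ/5$; writing $|I_\eta|=11m$ with $m=m_I\in\{1,5\}$, the relation $\sigma=j/5$ says the unique wild jump in the lower numbering is $jm/5$, so that $G_0(\eta)=I_\eta$, the groups $G_1(\eta),\dots,G_{jm/5}(\eta)$ all equal the Sylow $11$-subgroup $P_\eta$ of $I_\eta$, and $G_i(\eta)=1$ for $i>jm/5$. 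The resulting different exponent at $\eta$ is $(11m-1)+(jm/5)(11-1)$; feeding this into Riemann--Hurwitz for $Y\to\Proj^1_k$ with $|\M|=7920$ and $[\M:I]=720/m$ points over $\infty$ expresses $2\genus(Y)-2$ as an explicit affine function of $j$.

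The heart of the proof is computing the degree of the different $\mathfrak{d}_{Y/X}$ of the $H$-Galois cover $Y\to X$. Since lower-numbering ramification groups are compatible with passing to subgroups, the filtration of $Y\to X$ at $\eta$ is $\{H\cap G_i(\eta)\}_i$, so its different exponent there is $(|H\cap I_\eta|-1)+(jm/5)(|H\cap P_\eta|-1)$; summing over the $720/m$ points over $\infty$ yields $\deg\mathfrak{d}_{Y/X}$. Evaluating the sum requires knowing $|H\cap I_\eta|$ and $|H\cap P_\eta|$ for each $\eta$, and this is where the group theory of $\M$ enters. When $H\cong\text{M}_{10}$ we have $11\nmid|H|$, hence $H\cap P_\eta=1$; and since an element of order $11$ is an $11$-cycle in the degree $11$ action, $I_\eta$ acts transitively on those $11$ points, so a point stabilizer --- and therefore $H\cap I_\eta$ --- has order $|I_\eta|/11=m$. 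When $H\cong\PSL$, each $\eta$ determines a Sylow $11$-subgroup $P_\eta$ of $\M$; the $\eta$ whose $P_\eta$ lies in one of the $n_{11}(H)=12$ Sylow subgroups of $H$ are precisely those with $I_\eta\subseteq H$ (so $(|H\cap I_\eta|,|H\cap P_\eta|)=(11m,11)$), while for every other $\eta$ one has $H\cap P_\eta=1$ and $|H\cap I_\eta|=m$ --- the last because the unique point of the degree $12$ set fixed by $P_\eta$ is fixed by $I_\eta$, hence is distinct from the point stabilized by $H$, so that $H$ corresponds to a point lying in the $11$-cycle of $I_\eta$.

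Finally I would substitute into $2\genus(X)-2=\bigl((2\genus(Y)-2)-\deg\mathfrak{d}_{Y/X}\bigr)/|H|$ and simplify; the constant terms and the terms involving $1/m$ cancel, leaving $\genus(X)=j-5$ for $d=11$ and $\genus(X)=j-6$ for $d=12$. The main obstacle is the bookkeeping in the previous paragraph --- in particular, showing that outside the twelve distinguished Sylow subgroups in the $d=12$ case the inertia group meets $H$ in a subgroup of the expected order $m$ rather than something smaller --- which rests on the known $4$- and $3$-transitivity of $\M$ on $11$ and $12$ points together with a Sylow count; everything else is routine arithmetic with the Riemann--Hurwitz formula.
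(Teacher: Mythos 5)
Your proof is correct, but it takes a genuinely different route from the paper. The paper simply quotes \cite[Proposition~1.3]{MR2777805}, which packages the genus of a degree~$d$ quotient of a $(G,I)$-Galois cover of the affine line as $(2j-t-d+1)/2$ with $t=\#\varphi^{-1}(\infty)-1$, and then only has to observe that $t=0$ for $d=11$ and $t=1$ for $d=12$ (an $11$-cycle is transitive on $11$ points and fixes exactly one of $12$). You instead rederive the result from scratch: two applications of Riemann--Hurwitz through $Y\to X\to\Proj_k^1$, using Herbrand's compatibility of the lower-numbering filtration with subgroups to get $\deg\mathfrak{d}_{Y/X}=\sum_\eta\bigl[(|H\cap I_\eta|-1)+\tfrac{jm}{5}(|H\cap P_\eta|-1)\bigr]$, and then explicit permutation group theory of $\M$ to evaluate the intersections ($H\cong\text{M}_{10}$ meets every inertia group in a group of order $m$; $H\cong\PSL$ contains $60/m$ of the $720/m$ inertia groups outright and meets the remaining $660/m$ in groups of order $m$). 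I checked the arithmetic: $2\genus(Y)-2=-7920-720/m+1440j$, and the two cases give $2\genus(X)-2=2j-12$ and $2j-14$ respectively, so the constants do cancel as claimed. Your version is longer but self-contained and makes visible exactly where the $4$- and $3$-transitivity of $\M$ and the Sylow count in $\PSL$ enter; the paper's citation is shorter and hides this bookkeeping inside the quoted proposition, which is essentially proved by the same Riemann--Hurwitz argument in general. The one place to be fully explicit if you write this up is the count of points $\eta$ over $\infty$ with $P_\eta\subset H$ in the $d=12$ case (there are $12\cdot 5/m$ of them, since the fibers of $\eta\mapsto P_\eta$ all have size $5/m$); your sketch leaves this implicit but it is exactly what makes the $1/m$ terms cancel.
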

\begin{proof}
Pick $\theta \in I$ satisfying $|\theta| =  5$. The number of orbits of $\theta$ acting on $\{p+1, \ldots,d \}$ is denoted by $t$. By \cite[Proposition~1.3]{MR2777805}, $t=\#\varphi^{-1}(\infty) - 1$ and the genus of $X$ is given by $(2j-t-d+1)/2$.
By Lemma~\ref{lma:degrees}, the two possible degrees for $X\to\mathbb{P}_k^1$ are $11$ and $12$. If $d=11$ then $t=0$. Otherwise, $1 \leq t \leq d-p$. Thus when $d=12$ then $t=1$.
\end{proof}

\begin{prop}\label{prop:j8}
There exists an $(\M,\ZZ/11\rtimes\ZZ/5)$-Galois cover with ramification invariant $\sigma = 8/5$.
\end{prop}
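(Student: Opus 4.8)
The plan is to extract a cover from the explicit construction of Serre in \cite{MR1162313}: an $\M$-Galois cover $\phi\colon Y\to\Proj^1_k$ over $k=\overline{\mathbb{F}}_{11}$ that is branched only over $\infty$ — equivalently, an \'etale $\M$-cover of $\Aff^1_k$ (if the object recorded in \cite{MR1162313} is a non-Galois cover of degree $11$ or $12$, one first passes to its Galois closure, whose group is $\M$ by Serre's analysis). First I would read off the qualitative ramification data. Since $\phi$ covers the affine line it is wildly ramified over $\infty$, so its inertia group $I$ has order divisible by $11$; as $I\subset\M$ has Sylow $11$-subgroup $S\cong\ZZ/11$ normal in $I$ with cyclic prime-to-$11$ quotient, and $\text{N}_{\M}(S)\cong\ZZ/11\rtimes\ZZ/5$ with faithful $\ZZ/5$-action (because $\M$ has no element of order $55$), the only possibilities are $I\cong\ZZ/11$ or $I\cong\ZZ/11\rtimes\ZZ/5$. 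Correspondingly $m_I\in\{1,5\}$, and $\sigma=j/m_I$ is an integer in the first case and lies in $\tfrac15\ZZ\setminus\ZZ$ in the second.

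Next I would pin down $\sigma$ by a genus computation. Form a non-Galois quotient $\varphi\colon X=Y/H\to\Proj^1_k$ of degree $d$ with $11\le d<22$; by Lemma~\ref{lma:degrees} one may take $H\cong\text{M}_{10}$, so $d=11$, or $H\cong\PSL$, so $d=12$. I would then compute $\genus(X)$ from Serre's explicit model — directly from a defining equation for $X$ (for instance via the conductor of the wild place over $\infty$), or from genus data recorded in \cite{MR1162313}. By Lemma~\ref{lma:jumpsToGenera}, writing $\sigma=j/5$, one has $\genus(X)=j-5$ when $d=11$ and $\genus(X)=j-6$ when $d=12$; the expected outcome $\genus(X)=3$ (resp.\ $2$) gives $j=8$, hence $\sigma=8/5$. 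Since $8/5\notin\ZZ$, the dichotomy of the first paragraph forces $I\cong\ZZ/11\rtimes\ZZ/5$, which is exactly the statement to be proved.

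To close the argument I would verify the membership $8/5\in\sigma_{11}(\ZZ/11\rtimes\ZZ/5)$ required by Definition~\ref{defn:potentialJumps}: $8>5$, $11\nmid 8$, and $\gcd(8,5)=1=\tfrac1{11}\,|\text{Cent}(\ZZ/11\rtimes\ZZ/5)|$, the last equality holding because faithfulness of the $\ZZ/5$-action gives $\text{Cent}_{\M}(S)=S$ of order $11$. I would also quote from \cite{MR1162313} the two facts I rely on, namely that the Galois group of the cover is exactly $\M$ — neither a proper subgroup nor a larger group — and that it is unramified away from $\infty$; and I would record the input needed for Lemma~\ref{lma:jumpsToGenera}, that the order-$5$ element of $I$ has the stated orbit behaviour on the remaining letters so that $\#\varphi^{-1}(\infty)$ has the required value.

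The main obstacle is the computation that $\sigma$ equals $8/5$ precisely, rather than one of the other candidates $6/5,7/5,9/5,2$ permitted by Lemma~\ref{lma:possibleRI}: this requires either working out the higher-ramification filtration at $\infty$ in Serre's explicit model or, equivalently, computing the genus of the degree $11$ (or $12$) quotient exactly. Everything else — the structural constraints on $I$, the membership check, and the translation through Lemma~\ref{lma:jumpsToGenera} — is routine bookkeeping once that single number is in hand.
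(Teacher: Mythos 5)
Your proposal follows essentially the same route as the paper: take Serre's explicit degree-$11$ model $X^{11}+2X^9+3X^8-T^8$ of a cover of $\Aff^1_k$ whose Galois closure is $\M$, compute that the geometric genus of this quotient is $3$, and feed that into Lemma~\ref{lma:jumpsToGenera} to get $j=8$, hence $\sigma=8/5$ and inertia group $\ZZ/11\rtimes\ZZ/5$. The one step you leave open --- pinning down the genus exactly --- is precisely the step the paper delegates to a computer algebra system, so there is no substantive difference in approach.
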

\begin{proof}
The curve $C \colon X^{11} + 2X^9 + 3X^8 -T^8$ is an unramified cover of $\mathbb{A}_k^1$ mapping $(X,T) \mapsto T$. It is wildly ramified over $\infty$ with Galois closure $M_{11}$ {\cite[pg.\ 43]{MR1162313}}. Note that this curve has non-ordinary singularities. The geometric genus $3$ can be computed in a computer package such as Magma or Sage. Because $C$ is a degree 11 cover of $\Proj_k^1$, wildly ramified above $\infty$, Lemma~\ref{lma:jumpsToGenera} implies $\sigma = \frac{8}{5}$. The inertia group for a wildly ramified point over $\infty$ with $\sigma=\frac{8}{5}$ is isomorphic to $\mathbb{Z}/11 \rtimes \mathbb{Z}/5$.
\end{proof}
\begin{prop}\label{prop:j10}
There exists an $(\M,\ZZ/11)$-Galois cover with ramification invariant $\sigma = 2$.
\end{prop}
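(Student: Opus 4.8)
The plan is to realize the required cover as a new tail in the stable reduction modulo $11$ of a three-point $\M$-Galois cover in characteristic $0$, to read off its ramification invariant from the vanishing-cycles identity, and then to force the inertia group to be $\ZZ/11$ by a numerical argument.

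First I would fix a complete discrete valuation ring $R$ with residue field $k=\overline{\mathbb{F}}_{11}$ and fraction field $K$ of characteristic $0$, and exhibit an $\M$-Galois cover $\phi\colon Y_0\to\mathbb{P}^1_K$ branched at $\{0,1,\infty\}$, coming from a generating triple $(g_1,g_2,g_3)$ of $\M$ with $g_1g_2g_3=1$ --- for instance a rational rigid triple realizing $\M$ over $\mathbb{Q}(t)$, base-changed to $K$. The triple must be chosen so that the reduction of $\phi$ at $11$ is bad and, more delicately, so that this reduction carries a single new tail. Taking one $g_i$ to have order divisible by $11$ forces bad reduction, as in the setup of Section~\ref{sec:vanishingcycles}; alternatively a tamely branched cover with $11$ dividing $|\M|$ may already reduce badly through the vanishing-cycles phenomenon.

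By the stable reduction theorem \cite[Theorem~2]{MR1992833} (recalled in Section~\ref{sec:vanishingcycles}), $\phi$ acquires a stable model $\phi_s\colon Y_s\to Z_s$ over a finite extension of $R$, with $Z_s$ a tree of projective lines having a unique original component $Z$, and with the tame branch points specializing to primitive tails. Since $\M$ is $11$-pure, \cite[Proposition~3.1.7]{MR1670532} shows the cover is connected over a new tail $X_\alpha$; hence the component of $Y_s$ over $X_\alpha$ is irreducible and the induced cover $\phi_\alpha\colon X_\alpha\to\mathbb{P}^1_k$ is a connected $\M$-Galois cover branched only at $\infty_\alpha$, i.e.\ an $\M$-Galois cover of $\mathbb{A}^1_k$. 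If the cover has been arranged so that $X_\alpha$ is the \emph{only} new tail and $|\BB_0|=3$, then the vanishing-cycles identity \eqref{eqn:rvc} collapses to $\sigma_\alpha-1=1$, so $\sigma_\alpha=2$; if instead one branch point is wild, I would run the same argument with the appropriate form of the formula from \cite[Section~3.4.4]{MR1670532}, together with the genus bookkeeping of Section~\ref{sec:intermediateGenus} and the genus formula underlying Lemma~\ref{lma:jumpsToGenera}, to force $\sigma_\alpha=2$.

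Finally, the inertia group $I$ of $\phi_\alpha$ at $\infty_\alpha$ lies in $\mathcal{I}_{11}(\M)$, so $I\cong\ZZ/11\rtimes\ZZ/m_I$ with $m_I\in\{1,5\}$ and $\sigma_\alpha=j/m_I$ for the inertia jump $j$, where $11\nmid j$. If $m_I=5$, then $\sigma_\alpha=2$ forces $j=10$, hence $\gcd(j,m_I)=5$, contradicting the requirement $\gcd(j,m_I)=|\text{Cent}(I)|/11=1$ in Definition~\ref{defn:potentialJumps} (the centralizer of an order-$11$ element of $\M$ is cyclic of order $11$). Thus $m_I=1$ and $I\cong\ZZ/11$, which is the claim. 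I expect the main obstacle to be the construction: choosing the characteristic-$0$ cover and understanding its stable reduction at $11$ well enough to guarantee that exactly one new tail occurs, so that \eqref{eqn:rvc} gives $\sigma=2$ rather than a smaller value split among several tails (as must happen whenever, say, $6/5$ or $7/5$ is realized). This likely demands an explicit analysis of the reduction --- possibly computer-assisted, in the spirit of Proposition~\ref{prop:j8} --- or a structural result controlling the number of new tails for suitable $\M$-covers.
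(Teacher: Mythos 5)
You have the right framework --- stable reduction at $11$ of a three-point $\M$-cover, the vanishing-cycles identity \eqref{eqn:rvc}, and $11$-purity to get connectedness over a new tail --- but you explicitly leave open the decisive step, namely ruling out the configurations in which the total drop $\sum_\alpha(\sigma_\alpha-1)=1$ is split among several new tails with $\sigma_\alpha\in\{6/5,7/5,8/5,9/5\}$. That is not a technical loose end; it is the entire content of the proposition, and without it the argument only recovers Lemma~\ref{lma:possibleRI}, which already says that \emph{some} $\sigma\in\{6/5,7/5,8/5,9/5,2\}$ occurs. Your guess that closing this gap requires an explicit or computer-assisted analysis of the stable model is not how the paper proceeds.

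The paper's mechanism is a genus comparison that never touches the stable model directly. One takes all three branch classes to be $11$-classes (so $|\BB_0|=3$ with no primitive tails, and the clean form \eqref{eqn:specificRVC} of the vanishing-cycles formula applies --- note you hedge between this setup and one with a tame or mixed branch locus, but committing to three wild classes is what makes the bookkeeping work). The characteristic-$0$ cover $Y_0$ dominates the degree-$12$ quotient $X_0=Y_0/\PSL$, whose genus is computed to be $4$ by the intermediate genus formula \eqref{eqn:intermediateGenusFormula}. The reduction must therefore also dominate a (degree-$12$) curve of genus $4$, and by Lemma~\ref{lma:jumpsToGenera} the genus contributed by a new tail with invariant $j_\alpha/5$ is determined by $j_\alpha$; summing over the candidate multisets allowed by \eqref{eqn:specificRVC} (Table~\ref{tbl:raynaud2}) shows that only the single-tail configuration $\{\sigma_\alpha\}=\{2\}$ achieves total genus $4$. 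Your concluding numerical argument that $\sigma=2$ forces $I\cong\ZZ/11$ rather than $\ZZ/11\rtimes\ZZ/5$ (since $j=10$ would make the jump divisible by $m_I=5$, which is impossible for a faithful $\ZZ/5$-action on $\ZZ/11$) is correct and matches the paper's implicit reasoning, but it sits on top of a construction you have not actually completed.
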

\begin{proof}
Let $C = (C_1, C_2, C_3)$ where each $C_i$ is an 11-conjugacy class of $\M$ and for some $i$ and $j$, $C_i \not= C_j$. Each $C_i$ is rational over $\QQ(\sqrt{-11})$; let $L = \QQ(\sqrt{-11})$.
Consider an $\M$-Galois cover $Y_0\to \Proj_L^1$ branched at three points $P_1$, $P_2$, and $P_3$ with an inertia group over $P_i$ generated by some element of $C_i$. Also consider the degree $12$ quotient cover $X_0 \to \Proj_L^1$ dominated by the $\PSL$-Galois cover $Y_0 \to X_0$. Applying \eqref{eqn:intermediateGenusFormula} with $C$ and $d=12$ yields $\text{genus}(X_0) = 4$.
\begin{table}[htbp]
\centering
\small
  \begin{tabular}{ | l | c | r |}
    \hline
    $|\mathbb{B}_\text{new}|$ & $\{ \sigma_\alpha : \alpha \in \mathbb{B}_\text{new} \}$ & $\sum_{\alpha \in \BB_{\text{new}}} \text{genus}(X_\alpha)$\\[2pt] \hline &&\\[-1em]
    1 & $\{ \frac{10}{5} \}$ & 4 \\[2pt] \hline &&\\[-1em]
    2 & $\{ \frac{6}{5}, \frac{9}{5} \}$ or $\{ \frac{7}{5}, \frac{8}{5} \}$ & 3 \\[2pt] \hline &&\\[-1em]
    3 & $\{ \frac{6}{5}, \frac{6}{5}, \frac{8}{5} \}$ or $\{ \frac{6}{5}, \frac{7}{5}, \frac{7}{5} \}$ & 2 \\[2pt] \hline &&\\[-1em]
    4 & $\{ \frac{6}{5}, \frac{6}{5}, \frac{6}{5}, \frac{7}{5} \}$ & 1\\[2pt] \hline &&\\[-1em]
    5 & $\{ \frac{6}{5}, \frac{6}{5}, \frac{6}{5}, \frac{6}{5}, \frac{6}{5} \}$ & 0 \\[2pt]
    \hline
  \end{tabular}
  \caption{Possible genera for the reduction of $X\to\Proj^1$ of degree $11$.}
  \label{tbl:raynaud2}
\end{table}

The vanishing cycles formula \eqref{eqn:rvc} gives a set of possibilities for $\{\sigma_\alpha : \alpha \in \BB_{\text{new}} \}$. For the selected ramification type, $|\BB_0| = 3$. Because all $C_i$ are conjugacy classes of order 11, none of the tails indexed by $\BB_0$ are primitive. Thus the vanishing cycles formula is
\begin{equation}\label{eqn:specificRVC}
\sum_{\alpha \in \BB_\text{new}} (j_\alpha/5-1) = 1.
\end{equation}
From \cite[Proposition~3.3.5]{MR1670532}, note that $5<j_\alpha$. For each set of possible ramification invariants use $(2j-t-d+1)/2$ to compute the sum of the genera of the curves $X_\alpha$.

Because $Y_0$ dominates a genus 4 cover, its reduction must as well. This only occurs in the first row of Table \ref{tbl:raynaud2} for the single new tail with ramification invariant $2$. The 11-purity of $\M$ ensures that the cover is connected over the tail component. Thus $\sigma = 2$ occurs with inertia group isomorphic to $\ZZ/11$.
\end{proof}

\begin{thm}\label{thm:thethm}
Abhyankar's Inertia Conjecture is true for $\M$ in characteristic $p=11$. More generally:
\begin{enumerate}[label=\alph*)]
  \item If $j \in \{ 8+i5, 16+i5, 24+i, 32+i55 \mid i\in\ZZ_{\geq 0} \}$ and $p\nmid j$, then $\sigma = j/5$ occurs as a ramification invariant for an $\M$-Galois cover of $\Proj_k^1$ branched at a single point and with inertia groups isomorphic to $\ZZ/11 \rtimes \ZZ/5$.
  \item If $\sigma \in \{2+i \mid i\in\ZZ_{\geq 0} \}$ and $p\nmid \sigma$, then $\sigma = 2+i$ occurs as a ramification invariant for an $\M$-Galois cover of $\Proj_k^1$ branched at a single point and with inertia groups isomorphic to $\ZZ/11$.
\end{enumerate}
\end{thm}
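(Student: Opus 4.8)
The plan is to assemble the ingredients already in place. First I would determine $\mathcal{I}_{11}(\M)$ explicitly: a Sylow $11$-subgroup of $\M$ has order $11$ with normalizer $\ZZ/11\rtimes\ZZ/5$, and since $\M$ is simple (so for any $I\in\mathcal{I}_{11}(\M)$ the $11$-part $J$ is nontrivial and its conjugates automatically generate $\M$), the only potential inertia groups are, up to $\M$-conjugacy, $\ZZ/11$ and $\ZZ/11\rtimes\ZZ/5$. Because $\M$ acts transitively on the fiber of a $(\M,I)$-cover over $\infty$, it suffices to exhibit one cover with inertia $\cong\ZZ/11\rtimes\ZZ/5$ and one with inertia $\cong\ZZ/11$; these are furnished by part~(a) at $j=8$ and by part~(b) at $\sigma=2$, so the first assertion of the theorem reduces to (a) and (b).

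For part~(b): Proposition~\ref{prop:j10} produces an $(\M,\ZZ/11)$-cover with $\sigma=2$, and here $m_I=1$, so the inertia jump equals $\sigma$. The jump-raising result used throughout the paper (\cite[Lemma~3.2.3]{MR2016596}, invoked via Lemma~\ref{lma:makingjumps} and the remark preceding Proposition~\ref{prop:infjumps}) then upgrades this to $(\M,\ZZ/11)$-covers with every inertia jump $\sigma'>2$, and such a jump necessarily satisfies $11\nmid\sigma'$. That is exactly the list $\sigma=2+i$ with $p\nmid\sigma$.

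For part~(a): Proposition~\ref{prop:j8} gives an $(\M,\ZZ/11\rtimes\ZZ/5)$-cover with inertia jump $j=8$, i.e.\ $\sigma=8/5$. Since $8\equiv 3\bmod 5$, I still need representatives of the residue classes $1,2,4$ modulo $5$, and these are the jumps $16,24,32$, supplied by the results of \cite{MR2250044}: concretely, pulling Serre's cover back along $t\mapsto t^n$ for $n=2,3,4$ stays connected and $\M$-Galois (as $\M$ is simple and nonabelian, it has no quotient of order dividing $n$), remains branched only at $\infty$, preserves the inertia-group isomorphism type (since $\gcd(n,55)=1$ forces the inertia over $\infty$ to have order $55$ again, hence to be $\ZZ/11\rtimes\ZZ/5$), and multiplies the wild jump by $n$, giving $j=16,24,32$. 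Applying \cite[Lemma~3.2.3]{MR2016596} once more to raise each of $j\in\{8,16,24,32\}$ by arbitrary nonnegative multiples of $m_I=5$ realizes every jump in $\{8+5i,\,16+5i,\,24+5i,\,32+5i\}$ with $11\nmid j$. Comparing this against the full set of admissible jumps for $\ZZ/11\rtimes\ZZ/5$ from Definition~\ref{defn:potentialJumps} — those $j>5$ with $\gcd(j,5)=1$ and $11\nmid j$ — the complement is precisely $\{6,7,9,12,14,17,19,27\}$, i.e.\ the eight exceptional ramification invariants $\{6/5,7/5,9/5,12/5,14/5,17/5,19/5,27/5\}$. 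Combining (a) at $j=8$ and (b) at $\sigma=2$ with transitivity of the $\M$-action on ramification points then yields Abhyankar's Inertia Conjecture for $\M$ in characteristic $11$.

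The main obstacle is the jump-multiplication step in part~(a): one must verify carefully that the cyclic base change really produces a smooth connected $\M$-Galois cover of $\Aff^1_k$ branched only at $\infty$, with the tame $\ZZ/5$-quotient of the inertia intact and the wild jump multiplied exactly by $n$ — this is the statement extracted from \cite{MR2250044}, and the one place where the $p$-purity of $\M$ (used, as in Proposition~\ref{prop:j10}, to guarantee connectedness over a single tail) and the behavior of the ramification filtration under tame base change both enter. The remaining steps — the identification of $\mathcal{I}_{11}(\M)$ and the bookkeeping with residue classes modulo $5$ and multiples of $11$ that pins the complement of the realized set to the eight exceptional invariants — are routine.
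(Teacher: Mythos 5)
Your argument is correct, and for the key step of realizing the residue classes $1,2,4 \bmod 5$ you take a genuinely different route from the paper. The paper produces $j=16,24,32$ \emph{additively}: it applies Theorem~\ref{thm:FromSubgroups} (i.e.\ the thickening result \cite[Theorem~2.3.7]{MR2016596}) with $G_1\cong G_2\cong\M$ and $j_1=j_2=8$ to get $j=16$, then iterates to get $24$ and $32$; the shift by multiples of $5$ is then supplied by \cite[Theorem~3.2]{MR2250044}. You instead produce these jumps \emph{multiplicatively}, by pulling the $j=8$ cover back along $t\mapsto t^n$ for $n=2,3,4$: connectedness follows from simplicity of $\M$, the inertia type $\ZZ/11\rtimes\ZZ/5$ is preserved because $\gcd(n,55)=1$, and the wild lower jump becomes $nj=16,24,32$ (all prime to $11$, as needed for the Artin--Schreier pole order to remain in standard form). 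Your route is more elementary and self-contained — it avoids the formal patching machinery entirely for this step and only needs the jump-raising lemma \cite[Lemma~3.2.3]{MR2016596} afterwards — while the paper's route reuses infrastructure already built in Section~3. One small bookkeeping remark: the tame-pullback statement you attribute to \cite{MR2250044} is not how the paper uses that reference (there it only furnishes the additive $+5i$ step), so if you keep your version you should either prove the pullback computation directly (it is a short local Artin--Schreier calculation) or locate the precise statement you need. Your identification of $\mathcal{I}_{11}(\M)$, the reduction of the conjecture to one cover per inertia type via transitivity on the fiber over $\infty$, and the computation that the unrealized jumps are exactly $\{6,7,9,12,14,17,19,27\}$ all match the paper.
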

\begin{proof}
Recall that the only possible inertia groups for an $\M$-Galois cover of $\mathbb{A}_k^1$ are isomorphic to $\ZZ/11 \rtimes \ZZ/5$ and $\ZZ/11$. By Propositions~\ref{prop:j8} and \ref{prop:j10}, each of these occurs with ramification invariants $8/5$ and $2$ respectively. The other inertia jumps can be produced with applications of \cite[Corollary~2.3.1 Different Inertia Case]{MR2016596} with $r=1$. To see that $j = 16$ occurs, apply Theorem~\ref{thm:FromSubgroups} with $G_1 \cong G_2 \cong \M$, $I_1 \cong I_2 \cong \ZZ/11 \rtimes \ZZ/5$, and $j_1 = j_2 = 8$. Theorem~\ref{thm:FromSubgroups} can be reapplied with $j_1 = 16$ yielding $j = 24$. Likewise applying Theorem~\ref{thm:FromSubgroups} a final time with $j_1 = 24$ produces $j = 32$.

Finally \cite[Theorem~3.2]{MR2250044} allows $j$ to be increased by multiples of $5$.
\end{proof}

This method is not sufficient to determine whether these jumps $j$ occur: 6, 7, 9, 12, 14, 17, 19, and 27.

\printbibliography

\Addresses

\end{document}